\documentclass{amsart}

\usepackage{amsmath}
\usepackage{amsthm}
\usepackage{amsfonts}
\usepackage{amssymb}

\newcommand\R{{\mathbb{R}}}
\newcommand\C{{\mathbb{C}}}

\renewcommand\P{{\mathbf{P}}}
\newcommand\E{{\mathbf{E}}}

\renewcommand\Im{{\operatorname{Im}}}
\renewcommand\Re{{\operatorname{Re}}}
\newcommand\eps{{\varepsilon}}

\newcommand\tr{\operatorname{trace}}
\newcommand\dist{\operatorname{dist}}

% \newcommand\bm{{\mathbf{m}}}

%

%cal letter

%number theory

% tilde

% \newcommand\th{{\tilde h}}

%\newcommand\to{{\tilde o}}

%\newcommand\tt{{\tilde t}}

%\def\I#1{{\bf 1}_{#1}}

% \swapnumbers
% \pagestyle{headings}
\parindent = 5 pt
\parskip = 12 pt

\theoremstyle{plain}
  \newtheorem{theorem}[subsection]{Theorem}

  \newtheorem{proposition}[subsection]{Proposition}
  
  \newtheorem{lemma}[subsection]{Lemma}

\theoremstyle{remark}
  \newtheorem{remark}[subsection]{Remark}

\theoremstyle{definition}
  \newtheorem{definition}[subsection]{Definition}

\include{psfig}

\begin{document}

\title[Universality up to the edge]{Random matrices: Universality of local eigenvalue statistics up to the edge}

\author{Terence Tao}
\address{Department of Mathematics, UCLA, Los Angeles CA 90095-1555}
\email{tao@math.ucla.edu}
\thanks{T. Tao is is supported by a grant from the MacArthur Foundation, by NSF grant DMS-0649473, and by the NSF Waterman award.}

\author{Van Vu}
\address{Department of Mathematics, Rutgers, Piscataway, NJ 08854}
\email{vanvu@math.rutgers.edu}
\thanks{V. Vu is supported by research grants DMS-0901216 and AFOSAR-FA-9550-09-1-0167.}

\begin{abstract}  This is a continuation of our earlier paper \cite{TVbulk} on the universality of the eigenvalues of Wigner random matrices.  The main new results of this paper are an extension of the results in \cite{TVbulk} from the bulk of the spectrum up to the edge.  In particular, we prove a variant of the universality results of Soshnikov \cite{Sos1} for the largest eigenvalues, assuming moment conditions rather than symmetry conditions.  The main new technical observation is that there is a significant bias in the Cauchy interlacing law near the edge of the spectrum which allows one to continue ensuring the delocalization of eigenvectors.
\end{abstract}

\maketitle

\setcounter{tocdepth}{2}
%\tableofcontents

\section{Introduction}

In our recent paper \cite{TVbulk}, a universality result (the Four Moment Theorem) was established for the eigenvalue spacings in the bulk of the spectrum of random Hermitian matrices.  (See \cite{Deisur} for an extended discussion of the universality phenomenon, and \cite{TVbulk} for further references on universality results in the context of Wigner Hermitian matrices.)

The main purpose of this paper is to extend this universality result to the edge of the spectrum as well.

\subsection{Universality in the bulk}

To recall the Four Moment Theorem, we need some notation.

\begin{definition}[Condition {\bf C0}]\label{co-def}  A random Hermitian matrix $M_n = (\zeta_{ij})_{1 \leq i, j \leq n}$ is said to obey \emph{condition {\bf C0}} if
\begin{itemize}
\item The $\zeta_{ij}$ are independent (but not necessarily identically distributed) for $1 \leq i \leq j \leq n$.  For $1 \leq i < j \leq n$, they have mean zero and variance $1$; for $i=j$, they have mean zero and variance $c$ for some fixed $c>0$ independent of $n$.

\item  (Uniform exponential decay) There exist constants $C, C' > 0$ such that
\begin{equation}\label{ued}
\P( |\zeta_{ij}| \ge t^C) \le \exp(- t)
\end{equation}
for all $t \ge C'$ and $1 \leq i,j \leq n$.
\end{itemize}
\end{definition}

Examples of random Hermitian matrices obeying Condition {\bf C0} include the GUE and GOE ensembles, or the random symmetric Bernoulli ensemble in which each of the $\zeta_{ij}$ are equal to $\pm 1$ with equal probability $1/2$.  In GOE one has $c=2$, but in the other two cases one has $c=1$.  The arguments in the previous paper \cite{TVbulk} were largely phrased for the case $c=1$, but it is not difficult to see that the arguments extend without difficulty to other values of $c$ (the main point being that a modification of the variance of a single entry of a row vector does not significantly affect the Talagrand concentration inequality, \cite[Lemma 43]{TVbulk}, or Lemma \ref{lemma:projection} below.).

Given an $n \times n$ Hermitian matrix $A$, we denote its $n$ eigenvalues as
$$ \lambda_1(A) \leq \ldots \leq \lambda_n(A),$$
and write $\lambda(A) := (\lambda_1(A),\ldots,\lambda_n(A))$.  We also let $u_1(A),\ldots,u_n(A) \in \C^n$ be an orthonormal basis of eigenvectors of $A$ with $A u_i(A) = \lambda_i(A) u_i(A)$; these eigenvectors $u_i(A)$ are only determined up to a complex phase even when the eigenvalues are simple, but this ambiguity will not cause a difficulty in our results as we will only be interested in the \emph{magnitude} $|u_i(A)^* X|$ of various inner products $u_i(A)^* X$ of $u_i(A)$ with other vectors $X$.

It will be convenient to introduce the following notation for frequent events depending on $n$, in increasing order of likelihood:

\begin{definition}[Frequent events]\label{freq-def}  Let $E$ be an event depending on $n$.
\begin{itemize}
\item $E$ holds \emph{asymptotically almost surely} if\footnote{See Section \ref{notation-sec} for our conventions on asymptotic notation.} $\P(E) = 1-o(1)$.
\item $E$ holds \emph{with high probability} if $\P(E) \geq 1-O(n^{-c})$ for some constant $c>0$.
\item $E$ holds \emph{with overwhelming probability} if $\P(E) \geq 1-O_C(n^{-C})$ for \emph{every} constant $C>0$ (or equivalently, that $\P(E) \geq 1 - \exp(-\omega(\log n))$).
\item $E$ holds \emph{almost surely} if $\P(E)=1$.  
\end{itemize}
\end{definition}

\begin{definition}[Moment matching]\label{def:match}
We say that two complex random variables $\zeta$ and $\zeta'$ \emph{match to order} $k$ if
$$ \E \Re(\zeta)^m \Im(\zeta)^l = \E \Re(\zeta')^m \Im(\zeta')^l$$
for all $m, l \ge 0$ such that $m+l  \le k$. 
\end{definition}

The first main result \cite{TVbulk} can now be stated as follows:

\begin{theorem}[Four Moment Theorem]\label{theorem:main}\cite[Theorem 15]{TVbulk} There is a small positive constant $c_0$ such that for every $0 < \eps < 1$ and $k \geq 1$ the following holds.
 Let $M_n = (\zeta_{ij})_{1 \leq i,j \leq n}$ and $M'_n = (\zeta'_{ij})_{1 \leq i,j \leq n}$ be
 two random matrices satisfying {\bf C0}. Assume furthermore that for any $1 \le  i<j \le n$, $\zeta_{ij}$ and
 $\zeta'_{ij}$  match to order $4$
  and for any $1 \le i \le n$, $\zeta_{ii}$ and $\zeta'_{ii}$ match  to order $2$.  Set $A_n := \sqrt{n} M_n$ and $A'_n := \sqrt{n} M'_n$,
  and let $G: \R^k \to \R$ be a smooth function obeying the derivative bounds
\begin{equation}\label{G-deriv}
|\nabla^j G(x)| \leq n^{c_0}
\end{equation}
for all $0 \leq j \leq 5$ and $x \in \R^k$.
 Then for any $\eps n \le i_1 < i_2 \dots < i_k \le (1-\eps)n$, and for $n$ sufficiently large depending on $\eps,k$ (and the constants $C, C'$ in Definition \ref{co-def}) we have
\begin{equation} \label{eqn:approximation}
 |\E ( G(\lambda_{i_1}(A_n), \dots, \lambda_{i_k}(A_n))) -
 \E ( G(\lambda_{i_1}(A'_n), \dots, \lambda_{i_k}(A'_n)))| \le n^{-c_0}.
\end{equation}
If $\zeta_{ij}$ and $\zeta'_{ij}$ only match to order $3$ rather than $4$, 
then there is a positive  constant $C$ independent of $c_{0}$ such that 
the conclusion \eqref{eqn:approximation} still holds provided that one strengthens \eqref{G-deriv} to
$$
|\nabla^j G(x)| \leq n^{-C j c_0}
$$
for all $0 \leq j \leq 5$ and $x \in \R^k$.
\end{theorem}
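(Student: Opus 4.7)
The plan is to use a Lindeberg-style swapping argument, replacing the entries of $M_n$ by those of $M'_n$ one Hermitian pair at a time while controlling the change in $\E G(\lambda_{i_1}(A_n),\dots,\lambda_{i_k}(A_n))$ at each step. Since there are $O(n^2)$ independent off-diagonal pairs and $n$ diagonal entries, it suffices to show that each individual swap changes the expectation by at most $O(n^{-2-c_0-c})$ for some absolute $c>0$; summing over all swaps then yields the desired $n^{-c_0}$ bound.

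For a single swap I would fix a pair $(p,q)$ with $p<q$, couple $A_n$ and $A'_n$ to differ only in the entries at positions $(p,q)$ and $(q,p)$, and Taylor-expand the functional $F(\xi) := G(\lambda_{i_1}(A(\xi)),\dots,\lambda_{i_k}(A(\xi)))$ in the (rescaled) perturbation $\xi$ of that entry to fifth order. Taking expectations, the moment-matching hypothesis forces the zeroth through fourth order Taylor coefficients to contribute identically under $\zeta_{pq}$ and $\zeta'_{pq}$, so they cancel in the difference. What remains is the fifth-order remainder, which by \eqref{G-deriv} absorbs an $n^{c_0}$ factor from $G$ and should pick up a small factor from $\partial_\xi^5 F$; the latter is computed by standard perturbation formulas that express $\partial_\xi^j \lambda_i$ as sums over the coordinate amplitudes $u_i(A)^* e_p$, $u_j(A)^* e_q$ and reciprocal spectral gaps $1/(\lambda_i(A)-\lambda_j(A))$.

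The main obstacle, and the real content of the proof, is bounding these perturbation sums uniformly. This requires two nontrivial random-matrix inputs that hold with overwhelming probability in the bulk $\eps n \le i \le (1-\eps)n$: \emph{eigenvector delocalization} $|u_i(A)^* e_p| \le n^{-1/2+o(1)}$, and a \emph{rigidity/level-repulsion} package controlling both $|\lambda_i(A)-\gamma_i|$ (where $\gamma_i$ is the classical semicircle location) and the spacings $|\lambda_{i+1}(A)-\lambda_i(A)|$ from below by a small negative power of $n$. Both must be preserved along the entire swapping path, which is the main reason the argument is self-referential: one establishes delocalization and gap bounds for a truncated Gaussian reference ensemble, then propagates them along swaps via the very Taylor expansion one is trying to justify, using Talagrand concentration (\cite[Lemma 43]{TVbulk}) and negative-second-moment bounds on distances from random vectors to subspaces. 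Plugging these estimates back into the fifth-order remainder yields the per-swap gain of $n^{-5/2+o(1)}$ needed to beat $n^{c_0}$.

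For the three-moment variant, the same expansion is performed, but the fourth-order Taylor coefficient no longer cancels in expectation; instead one absorbs the residual into the strengthened bound $|\nabla^j G(x)| \le n^{-Cjc_0}$, whose $j=4$ version supplies an extra $n^{-4Cc_0}$ factor large enough to defeat the $O(1)$ mismatch in the fourth moments, while the $j=5$ bound still dominates the fifth-order remainder. Everywhere, the delicate step is not the algebra of the swap but the uniform propagation of the delocalization and rigidity estimates across all $O(n^2)$ intermediate matrices, and this is exactly where I expect the bulk of the work to concentrate.
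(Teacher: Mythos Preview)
This theorem is cited from \cite{TVbulk} rather than proved in the present paper, but the paper rehearses the architecture of the argument when proving the edge extension in Section~\ref{fmt-sec}, so one can compare against that outline. Your high-level plan---Lindeberg swapping, fifth-order Taylor expansion in the swapped entry, moment matching killing the low-order terms, and a per-swap gain of $n^{-5/2+o(1)}$---is correct and matches the scheme encoded in Proposition~\ref{swap}.

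The main conceptual inaccuracy is your description of how the spectral input is maintained across the $O(n^2)$ intermediate matrices. You describe the argument as ``self-referential'': start from a Gaussian reference, establish delocalization and gap bounds there, and then \emph{propagate} them along the swapping path using the same Taylor expansion one is trying to justify. That would be circular, and it is not what \cite{TVbulk} does. The actual mechanism is that every intermediate matrix in the swapping chain is itself a random Hermitian matrix with independent upper-triangular entries obeying Condition~{\bf C0} (the entries are simply a mixture of $\zeta_{ij}$'s and $\zeta'_{ij}$'s), so the ESD concentration (Theorem~\ref{sdb}) and eigenvector delocalization (Proposition~\ref{deloc}) apply to it \emph{directly}. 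This is the content of Proposition~\ref{lemma:GCC}: one zeroes out the $(p,q)$ entry to form $A(0)$, observes that $A(0)$ is still a legitimate ensemble, and verifies the good configuration condition for $A(0),e_p,e_q$ with overwhelming probability from scratch. No Gaussian reference and no bootstrap along the path are involved.

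A secondary point: the ``good configuration'' that makes the Taylor remainder small is not quite rigidity $|\lambda_i-\gamma_i|$ plus nearest-neighbour level repulsion. It is the package in Proposition~\ref{swap}: a cumulative separation bound $|\lambda_i(A(z))-\lambda_{i_j}(A(z))|\ge n^{-\eps_1}|i-i_j|$ for $|i-i_j|\ge n^{\eps_1}$ (which follows from ESD concentration alone, not from a lower tail on individual gaps), together with dyadically organized delocalization bounds on projections $P_{i_j,\alpha}(A(z))e_p$. The lower-tail gap estimate (Theorem~\ref{ltail}) is a companion result used in applications, not an ingredient in the swap bound itself. Once you replace the propagation picture by the direct verification of Proposition~\ref{lemma:GCC}, the rest of your sketch is accurate.
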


Informally, this theorem asserts that the distribution of any bounded number of eigenvalues in the bulk of the spectrum of a random Hermitian matrix obeying condition {\bf C0} depends only on the first four moments of the coefficients.

There is also a useful companion result to Theorem \ref{theorem:main}, which is used both in the proof of that theorem, and in several of its applications:

\begin{theorem}[Lower tail estimates]\label{ltail}\cite[Theorem 17]{TVbulk} Let $0 < \eps < 1$ be a constant, and let $M_n$ be a random matrix obeying Condition {\bf C0}.  Set $A_n := \sqrt{n} M_n$.  Then for every $c_0 > 0$, and for $n$ sufficiently large depending on $\eps$, $c_0$ and the constants $C, C'$ in Definition \ref{co-def}, and for each $\eps n \leq i \leq (1-\eps) n$, one has $\lambda_{i+1}(A_n) - \lambda_i(A_n) \ge n^{-c_0}$
 with high probability. In fact, one has
$$ \P( \lambda_{i+1}(A_n) - \lambda_i(A_n) \leq n^{-c_0} ) \leq n^{-c_1}$$
for some $c_1 > 0$ depending on $c_0$ (and independent of $\eps$).
\end{theorem}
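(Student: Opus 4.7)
The plan is to condition on the top-left $(n-1)\times(n-1)$ minor $A_{n-1}$ of $A_n$ and combine the Cauchy interlacing law with the Schur complement identity: a small gap would force the projection of the last row onto a specific eigenvector of $A_{n-1}$ to be anomalously small, and this will be ruled out by concentration. Write
\[
A_n=\begin{pmatrix}A_{n-1}&X\\ X^*&d\end{pmatrix},
\]
where $X\in\C^{n-1}$ gathers the top $n-1$ entries of the last column (scaled by $\sqrt{n}$) and $d=\sqrt{n}\,\zeta_{nn}$, so that $(X,d)$ is independent of $A_{n-1}$. Let $\mu_1\le\dots\le\mu_{n-1}$ and $v_1,\dots,v_{n-1}$ denote the eigenvalues and orthonormal eigenvectors of $A_{n-1}$. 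The Cauchy interlacing law traps $\mu_i$ inside $[\lambda_i(A_n),\lambda_{i+1}(A_n)]$; setting $\alpha:=\mu_i-\lambda_i(A_n)\ge 0$ and $\beta:=\lambda_{i+1}(A_n)-\mu_i\ge 0$, one has $\alpha+\beta=G$, where $G:=\lambda_{i+1}(A_n)-\lambda_i(A_n)$ is the gap in question.

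Next, the Schur complement formula yields the secular equation $\lambda-d=\sum_{j} |v_j^*X|^2/(\lambda-\mu_j)$ at every eigenvalue $\lambda$ of $A_n$ outside $\mathrm{spec}(A_{n-1})$. Applying it at $\lambda_i(A_n)$ and $\lambda_{i+1}(A_n)$, subtracting, and isolating the $j=i$ contribution produces
\[
\frac{|v_i^*X|^2}{\alpha\beta}=1+\sum_{j\ne i}\frac{|v_j^*X|^2}{|\lambda_i(A_n)-\mu_j|\cdot|\lambda_{i+1}(A_n)-\mu_j|},
\]
in which every term on the right is non-negative. Conditioning on $A_{n-1}$, Talagrand's projection concentration (Lemma~43 of \cite{TVbulk}) controls the partial sums $\sum_{\mu_j\in I}|v_j^*X|^2$ over each short spectral window $I$ by $|\{j:\mu_j\in I\}|$ up to a $n^{o(1)}$ factor, which, combined with an a-priori count of the $\mu_j$'s per window from the local semicircle estimates of \cite{TVbulk}, supplies a polynomial upper bound $n^{O(1)}$ on the off-diagonal sum and a matching polynomial lower bound on $|v_i^*X|^2$. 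Rearranging,
\[
\alpha\beta\ \ge\ \frac{|v_i^*X|^2}{1+\text{off-diag sum}}\ \gtrsim\ n^{-O(1)}
\]
outside an event of probability $O(n^{-c_1})$.

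The elementary inequality $G=\alpha+\beta\ge 2\sqrt{\alpha\beta}$ converts this lower bound on $\alpha\beta$ directly into one on $G$; choosing the concentration parameters so that the polynomial loss in $\alpha\beta$ is absorbed into $c_0$ gives $G\ge n^{-c_0}$ on the same event, i.e.\ $\P(G\le n^{-c_0})\le n^{-c_1}$. The whole argument is translation-invariant in $i$, so the final constant $c_1$ depends only on $c_0$ (together with $C,C'$ from Condition~\textbf{C0}) and not on $\eps$, as required.

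The main technical obstacle lies in bounding the off-diagonal sum: a single denominator $|\lambda_i(A_n)-\mu_j|$ can be as small as the bulk spacing of $A_{n-1}$ while the corresponding numerator $|v_j^*X|^2$ is typically of order $n^{1+o(1)}$, so naive term-by-term estimates are far too weak and one must aggregate indices $j$ into spectral windows before applying Talagrand. A second subtlety is that the interlacing law alone only places $\mu_i$ \emph{somewhere} in $[\lambda_i(A_n),\lambda_{i+1}(A_n)]$ and does not guarantee that $\alpha,\beta$ are comparable; the AM-GM step above is what allows us to dispense with this and deduce a lower bound on $G=\alpha+\beta$ from a bound on $\alpha\beta$, avoiding the need to vary which row/column is exposed.
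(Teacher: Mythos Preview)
Your secular-equation identity is correct, but the argument breaks down at the step where you claim a polynomial upper bound on the off-diagonal sum $S$. The problem is the neighbouring terms $j=i-1$ and $j=i+1$. For $j=i-1$ the denominator is $(\lambda_i(A_n)-\mu_{i-1})(\lambda_{i+1}(A_n)-\mu_{i-1})$, and the first factor $\lambda_i(A_n)-\mu_{i-1}$ is \emph{not} bounded below by any spectral gap of $A_{n-1}$: Cauchy interlacing only says $\lambda_i(A_n)\in[\mu_{i-1},\mu_i]$, and nothing prevents $\lambda_i(A_n)$ from landing arbitrarily close to the left endpoint $\mu_{i-1}$. (Your remark that a single denominator ``can be as small as the bulk spacing of $A_{n-1}$'' is too optimistic here --- it can be much smaller.) Aggregating $j$'s into spectral windows does not help, since a single blown-up term already kills the bound. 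Worse, obtaining a high-probability lower bound on $\lambda_i(A_n)-\mu_{i-1}$ is a statement of exactly the same flavour as the one you are proving, so the argument is circular as it stands. A related symptom is that $\alpha\beta$ is the wrong proxy for $G$: even when $G$ is of unit size, $\alpha\beta$ can be arbitrarily small (take $\mu_i$ near either endpoint of $[\lambda_i,\lambda_{i+1}]$), so the AM--GM step cannot recover a good lower bound on $G$ from control of $\alpha\beta$ alone.

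The argument actually used in \cite{TVbulk} (reproduced with minor changes for the edge extension in Section~\ref{ltail-sec} of this paper) is structurally quite different. Rather than a one-shot secular-equation bound, one introduces a \emph{regularized gap}
\[
g_{i,l,n}=\inf_{1\le i_-\le i-l<i\le i_+\le n}\frac{\lambda_{i_+}(A_n)-\lambda_{i_-}(A_n)}{\min(i_+-i_-,\log^{C_1}n_0)^{\log^{0.9}n_0}}
\]
and proves a backward-propagation lemma (Lemma~\ref{backprop}): if $g_{i_0,l,n+1}$ is small, then either $g_{i_0,l+1,n}$ is comparably small or one of a short list of ``bad events'' occurs. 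One then iterates over roughly $\log^{O(1)}n$ successive minors; since the bad events have polynomially small probability (Proposition~\ref{bad-event}), the regularized gap at the top level is forced to be large with high probability. The infimum over a sliding window of index pairs in $g_{i,l,n}$ is exactly the device that sidesteps the circularity your direct approach runs into: at each step one is allowed to shift which pair $(i_-,i_+)$ witnesses the small gap, rather than being pinned to the fixed pair $(i,i+1)$ and the single interlaced eigenvalue $\mu_i$.
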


Theorem \ref{theorem:main} (and to a lesser extent, Theorem \ref{ltail}) can be used to extend the range of applicability for various results on eigenvalue statistics in the bulk for Hermitian or symmetric matrices, for instance in extending results for special ensembles such as GUE or GOE (or ensembles obeying some regularity or divisibility conditions) to more general classes of matrices.  See \cite{TVbulk}, \cite{ERSTVY}, \cite{ESY4} for some examples of this type of extension.  We also remark that a level repulsion estimate which has a similar spirit to Theorem \ref{ltail} was established in \cite[Theorem 3.5]{ESY3}, although the latter result establishes repulsion of eigenvalues in a fixed small interval $I$, rather than at a fixed index $i$ of the sequence of eigenvalues, and does not seem to be directly substitutable for Theorem \ref{ltail} in the arguments of this paper.

The results of Theorem \ref{theorem:main} and Theorem \ref{ltail} only control eigenvalues $\lambda_i(A_n)$ in the \emph{bulk} region $\eps n \leq i \leq (1-\eps) n$ for some fixed $\eps > 0$ (independent of $n$).  The reason for this restriction was technical, and originated from the use of the following two related results (which are variants of 
  previous results of Erd\H{o}s, Schlein, and Yau\cite{ESY1, ESY2, ESY3}), whose proof relied on the assumption that one was in the bulk:

\begin{theorem}[Concentration for ESD]\label{sdb}\cite[Theorem 56]{TVbulk}  For any $\eps, \delta > 0$ and any random Hermitian matrix $M_n = (\zeta_{ij})_{1 \leq i,j \leq n}$ whose upper-triangular entries are independent with mean zero and variance $1$, and such that $|\zeta_{ij}| \leq K$ almost surely for all $i,j$ and some $1 \leq K \leq n^{1/2-\eps}$, and any interval $I$ in $[-2+\eps, 2-\eps]$ of width $|I| \geq \frac{K^2 \log^{20} n}{n}$, the number of eigenvalues $N_I$ of $W_n := \frac{1}{\sqrt{n}} M_n$ in $I$ obeys the concentration estimate
$$ |N_I - n \int_I \rho_{sc}(x)\ dx| \ll \delta n |I|$$
with overwhelming probability, where $\rho_{sc}$ is the semicircular distribution
\begin{equation}\label{semi}
 \rho_{sc} (x):= \begin{cases} \frac{1}{2\pi} \sqrt {4-x^2}, &|x| \le 2 \\ 0,
&|x| > 2. \end{cases} 
\end{equation}
In particular, $N_I = \Theta_\eps(n |I|)$ with overwhelming probability.
\end{theorem}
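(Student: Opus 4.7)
The plan is to control $N_I$ by studying the Stieltjes transform
$$s_n(z) := \frac{1}{n}\tr (W_n - z)^{-1}$$
and comparing it to the Stieltjes transform $s_{sc}(z) := \int \rho_{sc}(x)/(x-z)\,dx$ of the semicircle law. By a standard smoothing argument (a variant of the Helffer--Sj\"ostrand formula), establishing the bound $|N_I - n\int_I \rho_{sc}| \ll \delta n|I|$ for all intervals $I$ of length $|I| \geq K^2 \log^{20} n / n$ in $[-2+\eps, 2-\eps]$ reduces to showing, with overwhelming probability uniformly for $z = E+i\eta$ with $E$ in the bulk and $c|I| \leq \eta \leq 1$, that $|s_n(z) - s_{sc}(z)| = o(\delta)$.

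To estimate $s_n - s_{sc}$ I would use the Schur complement / self-consistent equation method of Erd\H{o}s--Schlein--Yau. Writing $G(z) := (W_n - z)^{-1}$, the Schur complement formula gives
$$G_{ii}(z) = \frac{1}{-z + \tfrac{1}{\sqrt n}\zeta_{ii} - Y_i(z)}, \qquad Y_i(z) := a_i^* G^{(i)}(z) a_i,$$
where $G^{(i)}(z)$ is the resolvent of the $i$-th principal minor of $W_n$ and $a_i$ is the $i$-th column of $W_n$ with its diagonal entry removed. By Cauchy interlacing and the bound $\|G\|\leq \eta^{-1}$, one has $\tfrac{1}{n} \tr G^{(i)}(z) = s_n(z) + O(1/(n\eta))$. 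A Hanson--Wright / Talagrand concentration inequality (exploiting the uniform bound $|\zeta_{ij}|\leq K$) then yields
$$\bigl|Y_i(z) - \tfrac{1}{n}\tr G^{(i)}(z)\bigr| \ll \frac{K \log^{O(1)} n}{\sqrt{n}}\sqrt{\frac{\Im s_n(z)}{\eta}} + \frac{K^2 \log^{O(1)} n}{n\eta}$$
with overwhelming probability, where the first term uses the identity $\frac{1}{n}\|G^{(i)}\|_F^2 = \frac{1}{\eta}\Im\tfrac{1}{n}\tr G^{(i)}$. Substituting back, averaging $s_n(z) = \frac{1}{n}\sum_i G_{ii}(z)$, and comparing with the identity $s_{sc}(z) = -1/(z + s_{sc}(z))$ produces a perturbed fixed-point equation of the form $s_n(z) + 1/(z+s_n(z)) = \mathrm{err}(z)$. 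In the bulk region $E \in [-2+\eps,2-\eps]$ one has $\Im s_{sc}(z) = \Theta_\eps(1)$, so the linearization of $s \mapsto -1/(z+s)$ is a strict contraction near $s_{sc}(z)$, and a perturbative stability analysis converts the above error into $|s_n(z) - s_{sc}(z)| \ll \delta$ whenever $\eta \gg K^2 \log^{O(1)} n / n$, which matches the hypothesis on $|I|$.

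The main obstacle is making the concentration of $Y_i(z)$ hold \emph{uniformly} over $i \in \{1,\ldots,n\}$ and over $z$ in the relevant rectangle: uniformity in $i$ follows from a union bound once one has overwhelming probability for each fixed $i$, while uniformity in $z$ is handled by taking a polynomial-density net and invoking the Lipschitz bound $\|\partial_z G(z)\| \leq \eta^{-2}$. A secondary difficulty is that the self-consistent equation is only useful once one has an a priori bound on $\Im s_n(z)$; this is obtained by a continuity/bootstrap argument, starting at the scale $\eta = 1$ (where $\Im s_n$ is trivially controlled) and progressively decreasing $\eta$ by a factor of $1 + 1/\log n$ at each step until $\eta$ reaches the target scale $|I|$, using the Lipschitz continuity of $s_n$ in $\eta$ to propagate the bound along the way.
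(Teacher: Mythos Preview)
Your overall strategy---Stieltjes transform, Schur complement for the diagonal resolvent entries, self-consistent equation $s_n + 1/(z+s_n) \approx 0$, and a stability/continuity argument---is exactly the route taken in the paper. Note that Theorem~\ref{sdb} itself is only quoted from \cite{TVbulk}; the present paper proves the stronger Theorem~\ref{sdb-2} in Section~\ref{esd-asym} by what it calls ``the same (standard) Stieltjes transform method,'' so the relevant comparison is with that section.

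The implementation differs in two places. First, to pass from control of $s_n$ to control of $N_I$ you invoke a Helffer--Sj\"ostrand argument, whereas the paper uses the more elementary Lemma~\ref{lemma:S-transform}. Second, and more substantively, your concentration of $Y_i(z)$ goes through a Hanson--Wright bound with error $\ll K\log^{O(1)} n \cdot n^{-1/2}\sqrt{\Im s_n/\eta}$, which then forces a bootstrap in $\eta$ to keep $\Im s_n$ under control. The paper instead writes $Y_k - \E(Y_k\mid W_{n,k}) = \sum_j R_j/(\lambda_j - z)$ with $R_j = |u_j^* a_k|^2 - 1/n$, bounds partial sums $\sum_{i_-\le j\le i_+} R_j$ via the projection lemma (Lemma~\ref{lemma:projection}), and sums over dyadic annuli $\{j : (1+\alpha)^l\delta'\eta < |\lambda_j - x| \le (1+\alpha)^{l+1}\delta'\eta\}$, using the crude a~priori count $N_I \ll n|I|$ of Lemma~\ref{lemma:smallsc} to control the number of terms in each annulus. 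This replaces your continuous bootstrap by a two-step argument (crude lemma first, then refined estimate); the paper's continuity argument at the end is only used to select the correct root of the quadratic for $s_n$, not to propagate the concentration bound. Both routes are correct and yield the same conclusion; yours is the cleaner modern packaging, the paper's is slightly more self-contained once Lemma~\ref{lemma:smallsc} is in hand.
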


\begin{proposition}[Delocalization of eigenvectors]\label{deloc}\cite[Proposition 58]{TVbulk} Let $\eps, M_n, W_n,\zeta_{ij}, K$  be as in Theorem \ref{sdb}.  Then for any $1 \leq i \leq n$ with $\lambda_i(W_n) \in [-2+\eps,2-\eps]$, if $u_i(W_n)$ denotes a unit eigenvector corresponding to $\lambda_i(W_n)$, then with overwhelming probability each coordinate of $u_i(M_n)$ is $O_{\eps}( \frac{K^2 \log^{20} n}{n^{1/2}} )$.
\end{proposition}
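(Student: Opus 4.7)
The plan is to adapt the standard Erd\H{o}s--Schlein--Yau approach, combining a block decomposition of $W_n$ with the ESD concentration from Theorem \ref{sdb} and a Hanson--Wright type concentration inequality for quadratic forms. By a union bound over the $n$ coordinates (paid for by the overwhelming probability), it suffices to control $|x_n| := |(u_i(W_n))_n|$. Separate the last row and column of $W_n$: let $W_{n-1}'$ denote the top-left $(n-1) \times (n-1)$ minor of $W_n$ (normalized by $1/\sqrt{n}$ rather than $1/\sqrt{n-1}$), and let $a \in \C^{n-1}$ be the vector with entries $\zeta_{kn}/\sqrt{n}$ for $k < n$. Writing $u_i(W_n) = (v, x_n)^T$ and expanding the eigenvector equation $W_n u_i = \lambda_i(W_n) u_i$ block-wise, then projecting $a$ onto the spectral basis $\{(\lambda_j'(W_{n-1}'), u_j'(W_{n-1}'))\}$ of $W_{n-1}'$, one obtains the standard identity
\begin{equation}\label{eq:coord}
|x_n|^{-2} \;=\; 1 \;+\; \sum_{j=1}^{n-1} \frac{|(u_j'(W_{n-1}'))^* a|^2}{(\lambda_i(W_n) - \lambda_j'(W_{n-1}'))^2}.
\end{equation}
It therefore suffices to show the right-hand sum is $\gtrsim_\eps 1/\eta$ with overwhelming probability, where $\eta := K^2 \log^{20} n / n$.

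I would restrict the sum in \eqref{eq:coord} to the set $J$ of indices $j$ with $|\lambda_j'(W_{n-1}') - \lambda_i(W_n)| \leq \eta$. Since $\lambda_i(W_n) \in [-2+\eps, 2-\eps]$, for $n$ large the $\eta$-neighborhood of $\lambda_i(W_n)$ lies in $[-2+\eps/2, 2-\eps/2]$, and applying Theorem \ref{sdb} to $W_{n-1}'$ (the rescaling $\sqrt{(n-1)/n} = 1 - O(1/n)$ does not affect the semicircle asymptotics at this scale) gives $|J| \geq c_\eps n \eta = c_\eps K^2 \log^{20} n$ with overwhelming probability. On this restricted sum,
\[
\sum_{j \in J} \frac{|(u_j')^* a|^2}{(\lambda_i(W_n) - \lambda_j'(W_{n-1}'))^2} \;\geq\; \eta^{-2} \, a^* P_J a,
\]
where $P_J := \sum_{j \in J} u_j'(u_j')^*$ is the spectral projector of rank $|J|$.

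The crucial structural point is that $W_{n-1}'$, and thus $\{u_j'\}$, is \emph{independent} of $a$. Conditioning on $W_{n-1}'$ fixes $P_J$ while leaving the distribution of $a$ intact: independent entries of mean zero, variance $1/n$, uniformly bounded by $K/\sqrt{n}$. A Talagrand--type concentration inequality for $a^* P_J a$ of exactly the flavor of Lemma \ref{lemma:projection} then yields $a^* P_J a \geq |J|/(2n)$ with overwhelming probability, as soon as $|J| \gg K^2 \log^{O(1)} n$, which is guaranteed by the preceding step. Combining the bounds,
\[
|x_n|^{-2} \;\geq\; \frac{|J|}{2 n \eta^2} \;\gtrsim_\eps\; \frac{1}{\eta} \;=\; \frac{n}{K^2 \log^{20} n},
\]
so $|x_n|^2 \lesssim_\eps K^2 \log^{20} n / n$, which comfortably implies the claimed bound.

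The main obstacle is a measurability subtlety: $J$ is defined via $\lambda_i(W_n)$, which is not $\sigma(W_{n-1}')$-measurable, so the apparent independence of $P_J$ from $a$ is not literal. I would handle this by a standard net argument: replace $\lambda_i(W_n)$ by a point $\lambda_*$ in a polynomial-size $n^{-A}$-net of $[-2+\eps,2-\eps]$, apply the quadratic-form concentration for each $\lambda_*$ to $P_{J(\lambda_*)}$, and take a union bound over the net. Choosing $A$ sufficiently large, perturbing $\lambda$ by $n^{-A}$ negligibly alters both the denominators in \eqref{eq:coord} and membership in $J$ (absorbing the wiggle into a slightly enlarged $\eta$), so the estimate transfers from the net to the true $\lambda_i(W_n)$ and finishes the proof.
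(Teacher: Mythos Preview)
Your argument is correct and follows essentially the same strategy as the paper (and the original Erd\H{o}s--Schlein--Yau / \cite{TVbulk} argument): the coordinate identity of Lemma~\ref{lemma:firstcoordinate}, ESD concentration to guarantee $\gtrsim_\eps n\eta$ minor eigenvalues within distance $\eta$ of $\lambda_i(W_n)$, and then the projection concentration of Lemma~\ref{lemma:projection}. The only cosmetic differences are that the paper localizes the minor eigenvalues via Cauchy interlacing from $W_n$ rather than by applying Theorem~\ref{sdb} to the minor directly, and that the measurability issue you flag is handled (implicitly) by union-bounding Lemma~\ref{lemma:projection} over the $O(n^2)$ spectral projectors onto consecutive blocks $\{u_{j_1}(W_{n-1}),\ldots,u_{j_2}(W_{n-1})\}$ rather than over a net in $\lambda_*$; both devices work equally well.
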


In the bulk region $[-2+\eps,2-\eps]$, the semicircular function $\rho_{sc}$ is bounded away from zero.  Thus, Theorem \ref{sdb} ensures that the eigenvalues of $W_n$ in the bulk tend to have a mean spacing of $\Theta_\eps(1/n)$ on the average.  Applying the Cauchy interlacing law
\begin{equation}\label{interlace}
 \lambda_i(W_n) \leq \lambda_i(W_{n-1}) \leq \lambda_{i+1}(W_n),
\end{equation}
where $W_{n-1}$ is an $n-1 \times n-1$ minor of $W_n$, this implies that the bulk eigenvalues of $W_{n-1}$ are within $\Theta_\eps(1/n)$ of the corresponding eigenvalues of $W_n$ on the average.  Using linear algebra to express the coordinates of the eigenvector $u_i(M_n)$ in terms of $W_n$ and a minor $W_{n-1}$ (see Lemma \ref{lemma:firstcoordinate} below), and using some concentration of measure results concerning the projection of a random vector to a subspace (see Lemma \ref{lemma:projection}), we eventually obtain Proposition \ref{deloc}.

\subsection{Universality up to the edge}

The main results of this paper are that the above four theorems can be extended to the edge of the spectrum (thus effectively sending $\eps$ to zero).  Let us now state these results more precisely.  Firstly, we have the following extension of Theorem \ref{sdb}:

\begin{theorem}[Concentration for ESD up to edge]\label{sdb-2}   Consider a  random Hermitian matrix $M_n = (\zeta_{ij})_{1 \leq i,j \leq n}$ whose upper-triangular entries are independent with mean zero and variance $1$, and such that $|\zeta_{ij}| \leq K$ almost surely for all $i,j$ and some $K \geq 1$.  Let $0 < \delta < 1/2$ be a quantity which can depend on $n$, and let $I$ be an interval such that
$$ |I| \geq \frac{K^2 \log^4 n}{n \delta^{10}}.$$
We also make the mild assumption $K = o( n^{1/2} \delta^2 )$.
Then the number of eigenvalues $N_I$ of $W_n := \frac{1}{\sqrt{n}} M_n$ in $I$ obeys the concentration estimate
$$ |N_I - n \int_I \rho_{sc}(x)\ dx| \ll \delta n |I|$$
with overwhelming probability.  
\end{theorem}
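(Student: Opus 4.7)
The plan is to adapt the Stieltjes transform approach of Erd\H{o}s--Schlein--Yau \cite{ESY1, ESY2, ESY3}, which is the engine behind the bulk version Theorem \ref{sdb}, to the edge regime by tracking the stability of the self-consistent equation more carefully. Write $s_n(z) := \frac{1}{n}\sum_{i=1}^n \frac{1}{\lambda_i(W_n)-z}$ for the Stieltjes transform of the empirical spectral distribution and $s_{sc}(z)$ for the Stieltjes transform of the semicircular law. The first reduction is to establish a concentration bound of the form $|s_n(E+i\eta) - s_{sc}(E+i\eta)| \ll \delta$ with overwhelming probability, uniformly in $E$ near $I$, where $\eta$ is chosen to be $\sim \delta |I|$ (up to logarithmic factors).

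To derive an approximate self-consistent equation I would apply the Schur complement formula to each diagonal entry of $(W_n-z)^{-1}$:
$$(W_n-z)^{-1}_{kk} = \frac{1}{\zeta_{kk}/\sqrt{n} - z - a_k^* (W_{n-1,k}-z)^{-1} a_k},$$
where $a_k$ is the off-diagonal part of the $k$-th column of $W_n$ and $W_{n-1,k}$ is the corresponding $(n-1)\times(n-1)$ minor. Summing over $k$ gives $s_n(z)$. Using Talagrand concentration in the spirit of \cite[Lemma 43]{TVbulk} (cf.\ Lemma \ref{lemma:projection}), together with the bound $|\zeta_{ij}|\le K$, I would show that the quadratic form $a_k^*(W_{n-1,k}-z)^{-1}a_k$ concentrates about $\frac{1}{n}\tr (W_{n-1,k}-z)^{-1}$ with error of order $K\log^{O(1)} n / \sqrt{n\eta}$; by Cauchy interlacing \eqref{interlace}, this trace is within $O(1/(n\eta))$ of $s_n(z)$. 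The upshot is the perturbed fixed-point identity $s_n(z)\bigl(z+s_n(z)\bigr) = -1 + O(\text{err})$, where $s_{sc}$ solves the corresponding exact equation.

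The crux is to solve this perturbed equation near the edge. In the bulk, the derivative of $s\mapsto s(z+s)$ at $s=s_{sc}(z)$ is bounded below, so errors transfer linearly. Near the edge this derivative degenerates like $\sqrt{|E^2-4|+\eta}$, and a perturbation of size $\varepsilon$ in the equation gives only $|s_n-s_{sc}| \ll \sqrt{\varepsilon}$. Iterating this square-root stability, together with a bootstrap to transfer control from the minor back to the full matrix, is what forces the polynomial loss $\delta^{10}$ in the hypothesis on $|I|$; the mild assumption $K = o(n^{1/2}\delta^2)$ is exactly what guarantees the probabilistic error remains dominated by $\delta$ in the worst case. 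Managing this edge stability analysis, and choosing $\eta$ to balance the concentration error against the square-root loss, is the main obstacle and the principal new ingredient relative to \cite[Theorem 56]{TVbulk}.

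Finally, once $|s_n(E+i\eta) - s_{sc}(E+i\eta)| \ll \delta$ has been upgraded to an overwhelming-probability uniform bound on a fine net of $E$ in a neighborhood of $I$ (using Lipschitz dependence of both $s_n$ and $s_{sc}$ in $E$), I would convert the Stieltjes transform estimate to the counting estimate. This can be done via the Helffer--Sjöstrand formula applied to a smoothed indicator of $I$, or more concretely through the identity
$$N_I = \frac{n}{\pi}\int_I \Im s_n(E+i\eta)\,dE + O(n\eta),$$
applied both to $s_n$ and to $s_{sc}$, so that the difference is $\ll \delta n|I| + n\eta$; choosing $\eta \ll \delta |I|$ yields the required bound $|N_I - n\int_I \rho_{sc}(x)\,dx| \ll \delta n|I|$ with overwhelming probability.
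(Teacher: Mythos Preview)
Your plan is essentially the paper's own: reduce via Lemma~\ref{lemma:S-transform} to a Stieltjes transform estimate (Theorem~\ref{stieltjes}), use Schur complement (Lemma~\ref{cool}) and concentration (Proposition~\ref{yk-conc}, proved there by a dyadic decomposition combining Lemmas~\ref{lemma:projection} and~\ref{lemma:smallsc}) to get $s_n(z)+\frac{1}{s_n(z)+z}=o(\delta^2)$, and then take a single square root to obtain $s_n(z)=s(z)+o(\delta)$. The only adjustments to your sketch are that no iteration or bootstrap is needed---one square root suffices---but you do need a short continuity argument to select the correct branch of $\pm\sqrt{z^2-4}$, and the conversion to $N_I$ is done via the black-box Lemma~\ref{lemma:S-transform} rather than the heuristic identity you wrote (whose $O(n\eta)$ error term is not automatic without an a priori density bound).
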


\begin{remark} The powers of $K$, $\delta$ and $\log n$ here are probably not best possible, but this will not be relevant for our purposes.  In our applications $K$ will be a power of $\log n$, and $\delta$ will be a negative power of $\log n$.  (This allows the error term $O(\delta n |I|)$ in the above estimate for $N_I$ to exceed the main term $n \int_I \rho_{sc}(x)\ dx$ when one is very near the edge, but this will not impact our arguments.)
\end{remark}

We prove this theorem in Section \ref{esd-asym}, using the same (standard) Stieltjes transform method that was used to prove Theorem \ref{sdb} in \cite{TVbulk} (see also \cite{ESY3}), with a somewhat more careful analysis.   We next use it to obtain the following extension of Proposition \ref{deloc}:

\begin{proposition}[Delocalization of eigenvectors up to the edge]\label{deloc2} Let $M_n$ be a random matrix obeying condition {\bf C0}.  Then with overwhelming probability, every unit eigenvector $u_i(M_n)$ of $M_n$ has coefficients at most $n^{-1/2} \log^{O(1)} n$, thus
$$ \sup_{1 \leq i,j \leq n} |u_i(M_n)^* e_j| \ll n^{-1/2} \log^{O(1)} n$$
where $e_1,\ldots,e_n$ is the standard basis.
\end{proposition}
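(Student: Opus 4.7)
The proof is modeled on that of Proposition \ref{deloc} in \cite{TVbulk}, substituting Theorem \ref{sdb-2} for the bulk empirical-spectral estimate and adding a new ingredient --- a bias in the Cauchy interlacing law --- to treat the edge. By the permutation invariance of the rows and columns of $M_n$ and a union bound over $i$, it suffices to show that for each fixed $i$, $|u_i(M_n)^* e_n| \ll n^{-1/2} \log^{O(1)} n$ with overwhelming probability. Writing
$$ M_n = \begin{pmatrix} M_{n-1} & X \\ X^* & \zeta_{nn} \end{pmatrix}, \qquad W_n := M_n/\sqrt n, \qquad W_{n-1} := M_{n-1}/\sqrt n, \qquad Y := X/\sqrt n, $$
and letting $v_j, \mu_j$ denote the unit eigenvectors and eigenvalues of $W_{n-1}$, Lemma \ref{lemma:firstcoordinate} yields the Schur-complement identity
$$ |u_i(M_n)^* e_n|^{-2} = 1 + \sum_{j=1}^{n-1} \frac{|v_j^* Y|^2}{(\lambda_i(W_n) - \mu_j)^2}, $$
so the task reduces to showing that the sum is $\gtrsim n / \log^{O(1)} n$ with overwhelming probability.

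After a standard truncation allowing us to assume $|\zeta_{ij}| \leq K := \log^{O(1)} n$, we apply Theorem \ref{sdb-2} to $W_{n-1}$ (with $\delta$ a negative power of $\log n$) to control the number of $\mu_j$'s in any interval of length $\gtrsim \log^{O(1)} n / n$. When $\lambda_i(W_n)$ lies in the moderate range $[-2+\eps_n, 2-\eps_n]$ with $\eps_n$ a fixed negative power of $\log n$, the density $\rho_{sc}(\lambda_i(W_n))$ is at least a positive power of $1/\log n$, so a window of length $\ell \sim \log^{O(1)} n / n$ around $\lambda_i(W_n)$ contains $\gtrsim \log^{O(1)} n$ eigenvalues $\mu_j$. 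Lemma \ref{lemma:projection}, applied to the projection of $Y$ onto the span of the corresponding $v_j$ (a subspace independent of $Y$), gives a squared projection of size $\gtrsim \log^{O(1)} n / n$; dividing by $\ell^2$ produces the required lower bound $\gtrsim n /\log^{O(1)} n$, exactly as in the proof of Proposition \ref{deloc}.

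The genuinely new case is when $\lambda_i(W_n)$ lies within $\eps_n$ of $\pm 2$: then $\rho_{sc}(\lambda_i(W_n))$ is negligible and the preceding argument cannot locate enough $\mu_j$'s inside a window of size $\log^{O(1)} n / n$, since the natural window dictated by $\rho_{sc}$ has length on the Tracy--Widom scale $\sim n^{-2/3}$, which is far too large to make $|v_j^* Y|^2 / (\lambda_i(W_n) - \mu_j)^2$ of size $n / \log^{O(1)} n$. Here one must exploit the announced bias in the Cauchy interlacing inequality $\lambda_k(W_n) \leq \mu_k \leq \lambda_{k+1}(W_n)$: even though the Cauchy interval at the edge has length $\sim n^{-2/3}$, the eigenvalue $\mu_k$ in fact sits within $\log^{O(1)} n / n$ of $\lambda_i(W_n)$ for at least one index $k$ close to $i$, with overwhelming probability. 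Once such a $k$ is found, Lemma \ref{lemma:projection} (applied to $v_k$, or to a short block of nearby $v$'s to upgrade the single-direction variance bound to overwhelming probability concentration) keeps $|v_k^* Y|^2 \gtrsim 1/n$, so a single term of the sum already produces the required contribution $\gtrsim n / \log^{O(1)} n$.

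The main obstacle is to establish this bias without circularly invoking the very delocalization being proved. My plan is to apply Theorem \ref{sdb-2} both to $W_n$ and to the rescaled minor $W_{n-1} = \sqrt{(n-1)/n}\,(M_{n-1}/\sqrt{n-1})$, observe that the two matrices have effective edges differing by $\sim 1/n$, which is much smaller than the Tracy--Widom spacing $n^{-2/3}$, and combine this with the Cauchy interlacing constraint to pin each $\mu_k$ near the edge to the appropriate end of its Cauchy interval. If a direct argument is not enough, a bootstrap is available: the naive edge argument (taking the larger window dictated by $\rho_{sc}$) yields a preliminary eigenvector bound of the shape $|u_i(M_n)^* e_n|^2 \ll n^{-\alpha} \log^{O(1)} n$ for some $\alpha>0$; plugging this back into the identity of Lemma \ref{lemma:firstcoordinate} sharpens the gap control between $\lambda_i(W_n)$ and the nearest $\mu_k$, and iterating a finite number of times produces the full $n^{-1/2} \log^{O(1)} n$ delocalization bound.
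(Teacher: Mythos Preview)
Your bulk argument matches the paper's. The edge argument, however, has a genuine gap: neither of your two proposed routes to the interlacing bias will work, and the paper establishes delocalization at the edge by a completely different mechanism that you have not identified.

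Your primary plan is to apply Theorem \ref{sdb-2} to both $W_n$ and $W_{n-1}$ and compare. But Theorem \ref{sdb-2} only controls $N_I$ up to an additive error $O(\delta n |I|)$, and near the edge the main term $n\int_I \rho_{sc}$ is of order $n|I|^{3/2}$; for the estimate to be nontrivial one needs $|I| \gg \delta^2$, i.e.\ $|I|$ of polylogarithmic size. This is far too coarse to pin any individual $\mu_k$ to within $\log^{O(1)} n / n$ of $\lambda_i(W_n)$ inside a Cauchy interval of length $n^{-2/3}$. The observation that the ``effective edges differ by $\sim 1/n$'' is true but irrelevant at this resolution. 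Your fallback bootstrap is also not viable as described: feeding a preliminary bound $|u_i(M_n)^* e_n|^2 \ll n^{-\alpha}$ back into Lemma \ref{lemma:firstcoordinate} only tells you the \emph{sum} is large, which (combined with Lemma \ref{lemma:smallsc}) at best says some $\mu_j$ is within $n^{-\alpha}$ of $\lambda_i(W_n)$; there is no evident mechanism to improve $\alpha$ on iteration, and no inductive structure linking delocalization of $M_n$ to that of $M_{n-1}$.

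The paper's key idea is the interlacing \emph{identity} (Lemma \ref{jn-lem}):
\[
\sum_{j=1}^{n-1} \frac{|u_j(W_{n-1})^* X|^2}{\lambda_j(W_{n-1}) - \lambda_i(W_n)} \;=\; \frac{1}{\sqrt n}\zeta_{nn} - \lambda_i(W_n),
\]
whose right-hand side is $\approx -2$ at the edge. One then shows, using Theorem \ref{sdb-2} and Lemma \ref{lemma:projection}, that the contribution of the $j$ with $|j-i| > \log^{O(1)} n$ is essentially the principal value integral $p.v.\int \rho_{sc}(x)/(x-\lambda_i(W_n))\,dx$, which equals $-\lambda_i(W_n)/2 \approx -1$. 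The discrepancy of $\approx 1$ must therefore come from the nearby $j$, and after a Cauchy--Schwarz step this forces the quadratic sum $\sum_j |u_j(W_{n-1})^* X|^2/(\lambda_j(W_{n-1})-\lambda_i(W_n))^2$ to be large. The interlacing bias you were trying to prove directly is then read off as a \emph{consequence} of delocalization (see the Remark following the proof), not used as an input to it.
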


The deduction of Proposition \ref{deloc2} from Theorem \ref{sdb-2} differs significantly 
from the analogous deduction of Proposition \ref{deloc} in Theorem \ref{sdb} in \cite{TVbulk}.
The main difference is that in the current case 
$\rho_{sc}$ is no longer bounded away from zero, which causes the average eigenvalue spacing between $\lambda_i(W_n)$ and $\lambda_{i+1}(W_n)$ to be considerably larger than $1/n$. For  instance, the gap between the second largest eigenvalue $\lambda_{n-1}(W_n)$ and the largest eigenvalue $\lambda_n(W_n)$ is typically of size $n^{-2/3}$.

The key new ingredient that help us to deal with this problem is the following observation:
 the Cauchy interlacing law \eqref{interlace}, when applied to the eigenvalues of the edge, is strongly {\it bias}. 
 In particular, 
 the gap between $\lambda_i(W_{n-1})$ and $\lambda_{i}(W_n)$ is significantly smaller than the gap between  $\lambda_i(W_{n-1})$ and $\lambda_{i+1}(W_n)$.
 We can show that (with high probability), the first gap is of order $n^{-1+o(1)}$ while the second can be as large as $n^{-2/3}$
 (and similarly for the gap between $\lambda_{i+1} (W_n)$ and $\lambda_i(W_{n-1})$ when $n/2 \leq i \leq n$). 
 This new ingredient will be sufficient  to recover Proposition \ref{deloc2}; see Section \ref{deloc-sec}, where the above proposition is proved.

Using Theorem \ref{sdb-2} and Proposition \ref{deloc2}, one can continue the arguments from \cite{TVbulk} to establish the following extensions of Theorem \ref{theorem:main} and Theorem \ref{ltail}:

\begin{theorem}[Four Moment Theorem up to the edge]\label{theorem:main2} There is a small positive constant $c_0$ such that for every $k \geq 1$ the following holds.
 Let $M_n = (\zeta_{ij})_{1 \leq i,j \leq n}$ and $M'_n = (\zeta'_{ij})_{1 \leq i,j \leq n}$ be
 two random matrices satisfying {\bf C0}. Assume furthermore that for any $1 \le  i<j \le n$, $\zeta_{ij}$ and
 $\zeta'_{ij}$  match to order $4$
  and for any $1 \le i \le n$, $\zeta_{ii}$ and $\zeta'_{ii}$ match  to order $2$.  Set $A_n := \sqrt{n} M_n$ and $A'_n := \sqrt{n} M'_n$,
  and let $G: \R^k \to \R$ be a smooth function obeying the derivative bounds \eqref{G-deriv}
for all $0 \leq j \leq 5$ and $x \in \R^k$.
 Then for any $1 \le i_1 < i_2 \dots < i_k \le n$, and for $n$ sufficiently large depending on $k$ (and the constants $C, C'$ in Definition \ref{co-def}) we have \eqref{eqn:approximation}.
If $\zeta_{ij}$ and $\zeta'_{ij}$ only match to order $3$ rather than $4$, 
then there is a positive  constant $C$ independent of $c_{0}$ such that 
the conclusion \eqref{eqn:approximation} still holds provided that one strengthens \eqref{G-deriv} to
\begin{equation}\label{g2}
|\nabla^j G(x)| \leq n^{-C j c_0}
\end{equation}
for all $0 \leq j \leq 5$ and $x \in \R^k$.
\end{theorem}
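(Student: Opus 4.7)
The plan is to run the Lindeberg swapping argument from \cite[Sections 3--4]{TVbulk} exactly as written for Theorem \ref{theorem:main}, but now fed with the edge-adapted inputs developed earlier in the present paper. Writing $A_n$ and $A'_n$ as the endpoints of a chain of $\binom{n+1}{2}$ intermediate Hermitian matrices that differ from their neighbors in exactly one entry (and its conjugate partner), I would compare $\E G(\lambda_{i_1}(\cdot),\ldots,\lambda_{i_k}(\cdot))$ across a single swap by Taylor expanding $G$ to fifth order in the real and imaginary parts of the swapped variable. The moment-matching hypotheses annihilate the first four (or three) terms of this expansion in expectation, so the task reduces to bounding the fifth-order (or fourth-order) remainder by $n^{-c_0-2-o(1)}$ uniformly over all swaps; summing $O(n^2)$ such bounds yields \eqref{eqn:approximation}.

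The fifth-order remainder, via the chain rule, is a product of derivatives of $G$ at an intermediate point (bounded by \eqref{G-deriv} or by \eqref{g2}) with derivatives of the eigenvalues $\lambda_{i_\ell}$ in the swapped entry. These eigenvalue derivatives, computed by the Hadamard/resolvent formulas, are polynomials in the eigenvector coordinates $u_j(A_n)^* e_i$ and the reciprocal gaps $(\lambda_j(A_n)-\lambda_{i_\ell}(A_n))^{-1}$ for $j \ne i_\ell$. In \cite{TVbulk} these were controlled by Proposition \ref{deloc} and Theorem \ref{sdb} (delocalization and ESD concentration in the bulk), together with the lower tail estimate Theorem \ref{ltail}. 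For the edge extension I would substitute Proposition \ref{deloc2} (giving $O(n^{-1/2}\log^{O(1)}n)$ on eigenvector coordinates uniformly across the spectrum) and Theorem \ref{sdb-2} applied with $K$ a power of $\log n$ and $\delta$ a small negative power of $\log n$ (controlling the number of eigenvalues in windows of size $n^{-1}\log^{O(1)}n$ even at the edge). The gap-reciprocal estimates require an edge analogue of Theorem \ref{ltail}; such an analogue can be bootstrapped from Theorem \ref{sdb-2} combined with a single-eigenvalue version of the swapping argument carried out in parallel, exactly as Theorem \ref{ltail} was obtained from Theorem \ref{sdb} in \cite{TVbulk}, and the larger typical spacing $\sim n^{-2/3}$ at the edge makes this step easier rather than harder.

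The main obstacle I anticipate is the polylogarithmic bookkeeping: the delocalization bound in Proposition \ref{deloc2} is weaker than its bulk counterpart by a factor $\log^{O(1)} n$, these factors appear raised to a fixed power in each fifth-order term, and the result is summed over $O(n^2)$ swaps. All losses incurred in this way are, however, $n^{o(1)}$, so they are absorbed by replacing the bulk constant $c_0$ by a strictly smaller positive constant; no qualitative change in the argument is needed. A secondary subtlety is that at the edge the eigenvalues $\lambda_{i_\ell}(A_n)$ concentrate on a window of width $O(n^{-2/3+o(1)})$ rather than on an order-one interval, but since $G$ is assumed smooth with the explicit derivative bound \eqref{G-deriv} (or \eqref{g2}), the smaller window only improves the Taylor estimates. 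With these substitutions in place, the proof proceeds line by line as in \cite{TVbulk}, and no additional genuinely new technical ingredient beyond Theorem \ref{sdb-2}, Proposition \ref{deloc2}, and the edge level-repulsion bootstrap is required.
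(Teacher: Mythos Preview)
Your proposal is correct and takes essentially the same approach as the paper: run the Lindeberg swapping argument of \cite{TVbulk} verbatim, substituting the edge-adapted inputs Theorem \ref{sdb-2}, Proposition \ref{deloc2}, and Theorem \ref{ltail2} for their bulk counterparts. The paper packages this via the ``good configuration'' framework---quoting Proposition \ref{swap} directly from \cite[Proposition 43]{TVbulk} and reproving the frequency of good configurations (Proposition \ref{lemma:GCC}) by repeating \cite[Proposition 44]{TVbulk} with the indicated substitutions---but the substance is identical to what you describe.
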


\begin{theorem}[Lower tail estimates up to the edge]\label{ltail2}  Let $M_n$ be a random matrix obeying Condition {\bf C0}.  Set $A_n := \sqrt{n} M_n$.  Then for every $c_0 > 0$, and for $n$ sufficiently large depending on $\eps$, $c_0$ and the constants $C, C'$ in Definition \ref{co-def}, and for each $1 \leq i \leq n$, one has $\lambda_{i+1}(A_n) - \lambda_i(A_n) \ge n^{-c_0}$ with high probability, uniformly in $i$.
\end{theorem}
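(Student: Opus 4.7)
The plan is to run the proof of Theorem \ref{ltail} from \cite{TVbulk} essentially verbatim, replacing the two bulk inputs (Theorem \ref{sdb} and Proposition \ref{deloc}) by their up-to-the-edge counterparts Theorem \ref{sdb-2} and Proposition \ref{deloc2}. The bulk hypothesis $\eps n \le i \le (1-\eps)n$ of Theorem \ref{ltail} enters the old argument only through those two tools, and the up-to-the-edge versions hold uniformly in $i$, so no further structural change should be needed.

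Concretely, I would fix $1 \le i \le n$, condition on the top $(n-1)\times(n-1)$ minor $A_{n-1}$ of $A_n$, and write the last row/column of $A_n$ as $(X,\zeta_{nn})$ with $X \in \C^{n-1}$. Setting $\mu_j := \lambda_j(A_{n-1})$ and $v_j := u_j(A_{n-1})$, the eigenvalues of $A_n$ are the roots of the secular equation
$$\zeta_{nn} - \lambda \;=\; \sum_{j=1}^{n-1} \frac{|v_j^* X|^2}{\mu_j - \lambda}.$$
On the bad event $\{\lambda_{i+1}(A_n) - \lambda_i(A_n) \le n^{-c_0}\}$, Cauchy interlacing \eqref{interlace} places $\mu_i$ inside this tiny interval, so the secular equation has two roots sandwiching the pole $\mu_i$ at distance $\le n^{-c_0}$. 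The analysis of \cite{TVbulk}---a dyadic decomposition around $\mu_i$, using Theorem \ref{sdb-2} at scale $\delta = \log^{-O(1)} n$ to bound the number of other $\mu_j$ in each annulus by $\ll n|I|$, and the crude identity $\sum_j |v_j^* X|^2 = \|X\|^2 \ll n$ for the tail---then forces the residue at the pole to satisfy $|v_i^* X|^2 \le n^{-c_0 + o(1)}$ with overwhelming probability. In the opposite direction, Proposition \ref{deloc2} delivers $\|v_i\|_\infty \ll n^{-1/2} \log^{O(1)} n$ with overwhelming probability, so conditionally on $A_{n-1}$ the scalar $v_i^* X$ is a sum of independent mean-zero variance-one terms with uniformly tiny $L^\infty$-coefficients; the Talagrand projection estimate \cite[Lemma 43]{TVbulk} (compare Lemma \ref{lemma:projection} below) then yields $|v_i^* X|^2 \gg 1$ with high probability. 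Comparing the two bounds contradicts the bad event except on a set of probability $n^{-c_1}$ for some $c_1 = c_1(c_0) > 0$, and a union bound over the $n$ choices of $i$ (shrinking $c_1$ slightly) yields the desired uniform-in-$i$ conclusion.

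The main obstacle is to verify that Theorem \ref{sdb-2} and Proposition \ref{deloc2} really do drive the secular-equation analysis all the way to the extreme edge. For Theorem \ref{sdb-2} the point is that only the upper bound $N_I \ll n|I|$ on the count of minor eigenvalues in a window is actually needed in the dyadic sum---the matching lower bound degrades near $\pm 2$ where $\rho_{sc}$ vanishes, but this is irrelevant here. For Proposition \ref{deloc2} the essential feature is that the delocalization bound on $v_i$ holds uniformly in $i$, even for edge indices; this is precisely the new contribution flagged in the introduction, powered by the biased Cauchy interlacing near the edge, and it is exactly what allows the concentration step $|v_i^* X|^2 \gg 1$ to be run at the edge without the bulk cutoff.
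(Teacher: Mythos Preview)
Your opening paragraph has the right high-level idea---the paper does indeed rerun the \cite{TVbulk} argument with Theorem \ref{sdb-2} and Proposition \ref{deloc2} substituted for their bulk counterparts.  But your concrete description of what that argument \emph{is} is wrong, and the one-step secular-equation analysis you sketch does not work.

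The proof of Theorem \ref{ltail} in \cite{TVbulk} is not a single-minor argument of the form ``small gap in $A_n$ forces $|v_i^* X|^2$ to be anomalously small.''  In fact that implication fails: if the neighbouring minor eigenvalues $\mu_{i-1},\mu_{i+1}$ (and perhaps a few more) also cluster near $\mu_i$, the secular equation happily admits two roots within $n^{-c_0}$ of $\mu_i$ with all residues $|v_j^* X|^2$ of their typical size $\Theta(n)$.  Your dyadic estimate using $N_I \ll n|I|$ only applies to windows of length at least $\log^{O(1)} n / n$, so it cannot rule out a handful of $\mu_j$ clustering at scale $n^{-c_0}$; and subtracting the secular equation at the two roots gives the \emph{lower} bound $|v_i^* X|^2 \ge t^+ t^-$, not an upper bound.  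Ruling out the minor clustering is essentially level repulsion for $A_{n-1}$---the very statement you are trying to prove---so a single-step argument is circular.  (The sign miracle that makes a one-step argument work is available only at the extreme indices $i=1,n$; see Proposition \ref{extremes}.)

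What \cite{TVbulk} actually does, and what the paper reproduces here, is a backwards induction in $n$ built on the \emph{regularized gap} $g_{i,l,n}$ of \eqref{giln}.  The deterministic Lemma \ref{backprop} says that a small regularized gap in $A_{n+1}$ forces a comparably small regularized gap in $A_n$ unless one of seven bad events (macroscopic spectral concentration, small or large inner products, etc.) occurs; Proposition \ref{bad-event} shows each bad event is rare, with Theorem \ref{sdb-2} and Proposition \ref{deloc2} entering precisely there.  Iterating over $\log^{O(1)} n$ steps and summing the failure probabilities gives the result.  One genuinely new wrinkle beyond swapping inputs: the induction needs the seed estimate \eqref{pill} on the extreme gap $\lambda_{n+1}(A_{n+1})-\lambda_n(A_{n+1})$, which is supplied separately by Proposition \ref{extremes}.
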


The novelty  here is that we  have no assumption on the indices $i_j$ and $i$. 
We present the proof of these Theorems in Sections \ref{ltail-sec}, \ref{fmt-sec}, following the arguments in \cite{TVbulk} closely.

\subsection{Applications}

As Theorems \ref{theorem:main2}, \ref{ltail2} extend Theorems \ref{theorem:main}, \ref{ltail}, all the applications of the latter theorems in \cite{TVbulk} (concerning the bulk of the spectrum) can also be viewed as applications of these theorems.  But because these results extend all the way to the edge, we can now obtain some results on the edge of the spectrum as well.  For instance, we can prove

\begin{theorem} \label{theorem:Sos} Let $k$ be a fixed integer and $M_n$ be a matrix obeying Condition {\bf C0}, and suppose that the real and imaginary part of the atom variables have the same covariance matrix as the GUE ensemble (i.e. both components have variance $1/2$, and have covariance $0$). Assume furthermore that all third moments of the atom variables vanish.   Set $W_n := \frac{1}{\sqrt{n}} M_n$. Then the joint distribution of the
$k$ dimensional random vector 
\begin{equation}\label{job}
\Big( (\lambda_n(W_n) -2)n^{2/3}, \dots,
(\lambda_{n-k+1}(W_n)-2) n^{2/3} \Big) 
\end{equation}
has a weak limit as $n \rightarrow \infty$, which coincides with that in the GUE case (in particular, the limit for $k=1$ is the GUE Tracy-Widom distribution\cite{tracy0}, and for higher $k$ is controlled by the Airy kernel \cite{forrester}). The result also holds for the smallest eigenvalues $\lambda_1, \dots, \lambda_k$, with the offset $-2$ replaced by $+2$.

If the atom variables have the same covariance matrix as the GOE ensemble (i.e. they are real with variance $1$ off the diagonal, and $2$ on the diagonal), instead of the GUE ensemble, then the same conclusion applies but with the GUE distribution replaced of course by the GOE distribution (see \cite{tracy} for the $k=1$ case).
\end{theorem}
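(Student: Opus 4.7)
The plan is to apply the Four Moment Theorem up to the edge (Theorem \ref{theorem:main2}) to compare $M_n$ directly against a reference GUE (resp.\ GOE) matrix $M_n'$, and then to invoke the classical Tracy--Widom / Airy limits for the reference ensemble.

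First I would check that the three-moment form of Theorem \ref{theorem:main2} applies. The hypothesis that the real and imaginary parts of the atom variables of $M_n$ have the same covariance matrix as those of $M_n'$ gives matching to order $2$, both off-diagonal and diagonal; since the Gaussian atom variables of $M_n'$ are symmetric, all of their odd moments vanish, and combined with the vanishing-third-moment hypothesis on $M_n$ this upgrades the matching to order $3$ for every pair $(i,j)$. Also, $M_n'$ trivially satisfies Condition {\bf C0}.

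Next, set $A_n := \sqrt n M_n$ and $A_n' := \sqrt n M_n'$; recall that $\lambda_i(A_n) = n\,\lambda_i(W_n)$, so the $j$-th coordinate of \eqref{job} can be rewritten as $n^{-1/3}(\lambda_{n-j+1}(A_n) - 2n)$. Given an arbitrary smooth bounded test function $F : \R^k \to \R$ with uniformly bounded derivatives up to order $5$, define
\[
 G(y_1, \ldots, y_k) := F\bigl( n^{-1/3}(y_1 - 2n), \ldots, n^{-1/3}(y_k - 2n) \bigr).
\]
Then $G(\lambda_{n-k+1}(A_n), \ldots, \lambda_n(A_n))$ is exactly $F$ applied to the rescaled edge eigenvalues in \eqref{job}, and the chain rule gives $|\nabla^j G| = O(n^{-j/3}\|F\|_{C^j})$ for $0 \le j \le 5$. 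This automatically satisfies the restrictive derivative bound \eqref{g2} once $c_0$ is fixed small enough that $C c_0 < 1/3$, so the three-moment form of Theorem \ref{theorem:main2} (applied to the index set $\{n-k+1,\ldots,n\}$, which is legitimate since the new Theorem \ref{theorem:main2} places no restriction on the indices) yields
\[
 \E\, G(\lambda_{n-k+1}(A_n), \ldots, \lambda_n(A_n)) = \E\, G(\lambda_{n-k+1}(A_n'), \ldots, \lambda_n(A_n')) + O(n^{-c_0}).
\]

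The right-hand side equals $\E F\bigl(n^{2/3}(\lambda_{n-k+1}(W_n')-2), \ldots, n^{2/3}(\lambda_n(W_n')-2)\bigr)$ where $W_n' := M_n'/\sqrt n$ is GUE (resp.\ GOE), and by the classical results of Tracy--Widom \cite{tracy0,tracy} and Forrester \cite{forrester} this converges as $n \to \infty$ to the expectation of $F$ against the GUE (resp.\ GOE) Airy process, which specializes to the Tracy--Widom distribution when $k=1$. A standard density/tightness argument (using Tracy--Widom tail bounds or the edge concentration of Theorem \ref{sdb-2}) extends the convergence from smooth bounded $F$ to bounded continuous $F$, giving the claimed weak convergence of \eqref{job}. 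The smallest-eigenvalue case is identical after substituting the index set $\{1,\ldots,k\}$ and centering at $-2$ in place of $+2$. The only genuinely nontrivial step in this plan is verifying the derivative condition \eqref{g2} on the rescaled $G$; once $c_0$ is chosen so that $Cc_0 < 1/3$, the $n^{-j/3}$ factor picked up from rescaling automatically beats $n^{-Cjc_0}$ for every $j \geq 1$, and the argument reduces to assembling previously stated tools.
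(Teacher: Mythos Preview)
Your proposal is correct and follows essentially the same route as the paper: compare $M_n$ to the GUE (resp.\ GOE) reference via the three-moment version of Theorem~\ref{theorem:main2}, using that the $n^{-1/3}$ rescaling factor makes the derivative bound \eqref{g2} automatic once $c_0$ is small enough, and then invoke the known Tracy--Widom/Airy limits for the reference ensemble. Your write-up is simply more explicit than the paper's about verifying the moment-matching and derivative hypotheses and about the passage from smooth to continuous test functions.
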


This result was previously established by Soshnikov \cite{Sos1} (see also \cite{sinai1}, \cite{sinai2}) in the case when $M_n$ is a Wigner Hermitian matrix (i.e. the off-diagonal entries are iid, and the matrix matches GUE to second order at least) with symmetric distribution (which implies, but is stronger than, matching to third order).  For some additional partial results in the non-symmetric case see \cite{peches}, \cite{peches2}.  The exponential decay condition in Soshnikov's result has been lowered to a finite number of moments; see \cite{Ruz}, \cite{Khor}.  It is reasonable to conjecture that the exponential decay conditions in this current paper can similarly be lowered, but we will not pursue this issue here.  It also seems plausible that the third moment matching conditions could be dropped, though this is barely beyond the reach of the current method\footnote{\emph{Note added in proof}: the third moment condition has recently been dropped in \cite{Joh2}, by combining the four moment theorem here with a new proof of universality for the distribution of the largest eigenvalue for gauss divisible matrices.}.

\begin{proof}  We just prove the claim for the largest $k$ eigenvalues and for GUE, as the claim for the smallest $k$ and/or GOE is similar.

Set $A_n := \sqrt{n} M_n$.  It suffices to show that for every smooth function $G: \R^k \to \R$, that the expectation
\begin{equation}\label{egg}
 \E G( (\lambda_n(A_n)-2n)/n^{1/3}, \ldots, (\lambda_{n-k+1}(A_n)-2n)/n^{1/3} ) 
\end{equation}
only changes by $o(1)$ when the matrix $M_n$ is replaced with GUE.  But this follows from the final conclusion of Theorem \ref{theorem:main2}, thanks to the extra factor $n^{-1/3}$. 
\end{proof}

\begin{remark} Notice that there is some room to spare in this argument, as the $n^{-1/3}$ gain in \eqref{egg} is far more than is needed for \eqref{g2}.  Because of this, one can obtain similar universality results for suitably normalised eigenvalues $\lambda_i(A_n)$ with $i \leq n^{1-\eps}$ or $i \geq n - n^{1-\eps}$ for any $\eps > 0$ (where the normalisation factor for $\lambda_i(A_n)$ is now $n^{2/3} \min(i,n-i)^{1/3}$, and the offset $-2$ is replaced by $-t$, where $\int_{-2}^t \rho_{sc}(x)\ dx = \frac{i}{n}$).  We omit the details.
\end{remark} 

\begin{remark} In analogy with \cite{ERSTVY}, one should be able to drop the third moment condition in Theorem \ref{theorem:Sos} if one can control the distribution of the largest (or smallest) eigenvalues from random matrices obtained from a suitable Ornstein-Uhlenbeck process, as in \cite{ERSY2}.
\end{remark}

\subsection{Notation}\label{notation-sec}

We consider $n$ as an asymptotic parameter tending
to infinity.  We use $X \ll Y$, $Y \gg X$, $Y = \Omega(X)$, or $X =
O(Y)$ to denote the bound $X \leq CY$ for all sufficiently large $n$
and for some constant  $C$. Notations such as
 $X \ll_k Y, X= O_k(Y)$ mean that the hidden constant $C$ depend on
 another constant $k$. $X=o(Y)$ or $Y= \omega(X)$ means that
 $X/Y \rightarrow 0$ as $n \rightarrow \infty$; the rate of decay here will be allowed to depend on other parameters.  We write $X = \Theta(Y)$ for $Y \ll X \ll Y$.

We view vectors $x \in \C^n$ as column vectors. The Euclidean norm of a vector $x \in \C^n$ is defined as $\|x\| := (x^* x)^{1/2}$. 

Eigenvalues are always ordered in increasing order, thus for instance $\lambda_n(A_n$) is the largest eigenvalue of a Hermitian matrix $A_n$, and $\lambda_1(A_n)$ is the smallest.

\subsection{Acknowledgements} 

The authors thank the anonymous referee for helpful comments and references, and Horng-Tzer Yau for additional references.

\section{General tools}

In this section we record some general tools (proven in \cite{TVbulk}) which we will use repeatedly in the sequel.  We begin with a very useful concentration of measure result that describes the projection of a random vector to a subspace.

\begin{lemma}[Projection Lemma]\label{lemma:projection}   
  Let $X=(\xi_1, \dots, \xi_n) \in \C^n$ be a
random vector whose entries are independent with mean zero, variance $1$, 
and are bounded in magnitude by $K$ almost surely for some $K $, where $K \ge 10( \E |\xi|^{4} +1)$. Let $H$ be a subspace of dimension $d$ and
$\pi_H$ the orthogonal projection onto $H$. Then
$$\P (|\|\pi_H (X)\| - \sqrt d| \ge t) \le 10 \exp(-
\frac{t^{2}}{10K^2}). $$
In particular, one has
$$ \| \pi_H(X)\| = \sqrt{d} + O( K \log n )$$
with overwhelming probability.

The same conclusion holds (with $10$ replaced by another explicit constant) if one of the entries $\xi_j$ of $X$ is assumed to have variance $c$ instead of $1$, for some absolute constant $c>0$.
\end{lemma}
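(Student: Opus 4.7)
The plan is to deduce the estimate from Talagrand's concentration inequality for convex Lipschitz functions of bounded independent random variables. First, I would observe that the function
\[
f(X) := \|\pi_H(X)\|
\]
is convex (as the composition of a linear map with a norm) and is $1$-Lipschitz with respect to the Euclidean norm on $\C^n$, since orthogonal projections are contractions. Since the entries $\xi_j$ are independent and almost surely bounded in magnitude by $K$, Talagrand's inequality (for product spaces of bounded random variables, applied to the convex Lipschitz function $f$) yields
\[
\P\big(\,|f(X)-\Mf|\ge s\,\big)\le 4\exp\!\big(-c\,s^2/K^2\big)
\]
where $\Mf$ is a median of $f(X)$ and $c>0$ is absolute. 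This is the ``raw'' ingredient; what remains is to pin down the centering constant as $\sqrt d$ rather than $\Mf$.

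Next I would control the median by passing through a direct moment computation. Writing $P=\pi_H$ as a self-adjoint projection of rank $d$, independence and the mean/variance normalizations give
\[
\E\|\pi_H(X)\|^2=\tr(P\,\E[XX^*])=\tr P=d.
\]
Expanding $\|\pi_H(X)\|^4=\sum_{i,j,k,l}P_{ij}P_{kl}\overline{\xi_i}\xi_j\overline{\xi_k}\xi_l$ and collecting only the terms that survive independence (the ``paired'' indices) expresses $\Var(\|\pi_H(X)\|^2)$ in terms of $\sum_i |P_{ii}|^2$, $\sum_{i\ne j}|P_{ij}|^2$, and the individual fourth moments $\E|\xi_i|^4$. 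Since $\sum_i |P_{ii}|^2\le \tr P = d$ and $\sum_{i,j}|P_{ij}|^2=\tr(P^2)=d$, and since the hypothesis $K\ge 10(\E|\xi|^4+1)$ controls the fourth-moment contribution, one concludes $\Var(\|\pi_H(X)\|^2)=O(K^2\, d)$. Chebyshev then gives $\big|\|\pi_H(X)\|^2-d\big|=O(K\sqrt d)$ with probability, say, $\ge 3/4$, and on this event $\big|\|\pi_H(X)\|-\sqrt d\big|=O(K)$ (using $|\sqrt a-\sqrt b|\le |a-b|/\sqrt{\max(a,b)}$). Hence $\Mf=\sqrt d+O(K)$.

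Combining the two ingredients: replacing $\Mf$ by $\sqrt d$ in the Talagrand bound costs only a bounded shift in $t$ by $O(K)$, so for $t\ge CK$ we obtain the claimed
\[
\P\big(\,|\|\pi_H(X)\|-\sqrt d|\ge t\,\big)\le 10\exp(-t^2/(10K^2)),
\]
while for $t=O(K)$ the bound is trivial after adjusting the absolute constant $10$. The ``in particular'' statement with $t=\Theta(K\log n)$ is immediate. Finally, the remark that a single entry $\xi_j$ is allowed to have variance $c$ instead of $1$ follows because modifying one coordinate changes $\|\pi_H(X)\|$ by at most $|\xi_j|\le K$ (by the triangle inequality and the contractive property of $\pi_H$), which is absorbed in the same argument after replacing the absolute constants. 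The main technical step is the variance computation in the second paragraph, since one must verify that the $K^2 d$ estimate translates, via the concavity of the square root, into an $O(K)$ deviation of the median of $f$ from $\sqrt d$ — the rest is a clean application of a black-box concentration inequality.
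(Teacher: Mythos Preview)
Your proposal is correct and follows essentially the same approach as the paper: the paper cites \cite[Lemma 40]{TVbulk} and notes that the main tool is Talagrand's concentration inequality, which is exactly what you use, together with a second-moment/Chebyshev argument to locate the median near $\sqrt d$. Your handling of the single entry with variance $c$ via the triangle inequality also matches the paper's one-line justification.
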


\begin{proof} See \cite[Lemma 40]{TVbulk}.  (The main tool in the proof is Talagrand's concentration inequality.)  It is clear from the triangle inequality that the modification of variance in a single entry does not significantly affect the conclusion except for constants.
\end{proof}

Next, we record a crude but useful upper bound on the number of eigenvalues in a short interval.

\begin{lemma}[Upper bound on ESD]\label{lemma:smallsc}  
Consider a  random Hermitian matrix $M_n = (\zeta_{ij})_{1 \leq i,j \leq n}$ whose upper-triangular entries are independent with mean zero and variance $1$ (with variance $c$ on the diagonal for some absolute constant $c>0$), and such that $|\zeta_{ij}| \leq K$ almost surely for all $i,j$ and some $K \geq 1$.  Set $W_n := \frac{1}{\sqrt{n}} M_n$. 
Then for any interval $I \subset \R$  with $|I| \geq \frac{K^2 \log^2 n}{n}$, 
$$ N_I \ll n |I|$$
with overwhelming probability, where $N_I$ is the number of eigenvalues of $W_n$ in $I$.
\end{lemma}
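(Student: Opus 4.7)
The plan is to reduce the bound on $N_I$ to a Stieltjes transform estimate and then prove that estimate by the Schur complement method. Let $E$ denote the midpoint of $I$, set $\eta := |I|/2$, and put $z := E + i\eta$. The elementary inequality $\mathbf{1}_{[E-\eta,E+\eta]}(\lambda) \leq 2\eta^{2}/((\lambda-E)^{2}+\eta^{2})$, valid for all real $\lambda$, gives after summing over the eigenvalues of $W_n$
\begin{equation*}
N_I \;\leq\; 2\eta \cdot n\, \Im s_n(z), \qquad s_n(z) := \tfrac{1}{n}\operatorname{trace}(W_n - z)^{-1}.
\end{equation*}
Hence it suffices to prove $\Im s_n(z) = O(1)$ with overwhelming probability, which will give $N_I = O(n|I|)$ as required.

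To control $\Im s_n(z)$, I would apply the Schur complement identity to each diagonal entry of the resolvent $G(z) := (W_n - z)^{-1}$:
\begin{equation*}
G_{kk}(z) \;=\; \frac{1}{\zeta_{kk}/\sqrt{n} - z - Y_k(z)}, \qquad Y_k(z) := a_k^{*}(W_n^{(k)} - z)^{-1} a_k,
\end{equation*}
where $a_k \in \C^{n-1}$ is the $k$-th column of $W_n$ with the diagonal entry removed and $W_n^{(k)}$ is the corresponding $(n-1)\times(n-1)$ principal minor. Since $\zeta_{kk}$ is real (as $W_n$ is Hermitian) and $\Im Y_k(z) \geq 0$, the denominator has imaginary part $-(\eta + \Im Y_k)$, yielding the key pointwise bound $\Im G_{kk}(z) \leq 1/(\eta + \Im Y_k(z))$. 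Thus the task reduces to a lower bound on $\Im Y_k$. Writing $\Im Y_k = \tfrac{\eta}{n}\,\xi_k^{*}G^{(k)}(z)G^{(k)*}(z)\xi_k$ with $\xi_k := \sqrt{n}\,a_k$ having independent entries of variance $1$ bounded by $K$, I would diagonalize $G^{(k)}G^{(k)*}$ in the eigenbasis of $W_n^{(k)}$, split its spectrum into $O(\log n)$ dyadic blocks according to the value of $|\lambda_j^{(k)} - z|$, and apply Lemma \ref{lemma:projection} to each block to conclude that $\Im Y_k(z) = (1+o(1))\,\Im s_{n-1}(z)$ with overwhelming probability under the hypothesis $\eta \geq K^{2}\log^{2} n/n$. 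Combined with the Cauchy interlacing identity \eqref{interlace}, which easily gives $|\Im s_n(z) - \Im s_{n-1}(z)| \leq 2/(n\eta) = o(1)$, this yields $\Im Y_k(z) \geq \Im s_n(z) - o(1)$ uniformly in $k$ with overwhelming probability.

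Averaging the pointwise bound over $k$ then gives the self-consistent inequality $\Im s_n(z) \leq 1/(\Im s_n(z) - o(1))$, which forces $\Im s_n(z) = O(1)$ with overwhelming probability and completes the argument. The main obstacle I anticipate is the careful dyadic application of Lemma \ref{lemma:projection}: one must show that the errors from summing over $O(\log n)$ scales remain small \emph{relative to} $\Im s_n(z)$, and this is delicate when $E$ lies near or outside the spectral support and $\Im s_n(z)$ itself may be small. This is resolved by a bootstrap: whenever $\Im s_n(z)$ is already $o(1)$ the bound $N_I \leq 2n\eta\,\Im s_n(z) = o(n|I|)$ is immediate, so one only needs the projection-lemma estimate in the regime $\Im s_n(z) \gtrsim 1$, where $\Im s_{n-1}(z)$ is bounded below and the relative error from Lemma \ref{lemma:projection} is comfortably controlled by the hypothesis on $|I|$.
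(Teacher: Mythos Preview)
Your proposal is correct and follows precisely the approach the paper cites for this lemma (Stieltjes transform reduction $N_I \leq 2n\eta\,\Im s_n(z)$, the Schur complement identity of Lemma~\ref{cool}, and concentration of $\Im Y_k$ via Lemma~\ref{lemma:projection}). One small note: the full $(1+o(1))$ control of $\Im Y_k$ is slightly stronger than what the dyadic estimate actually delivers at the threshold $n\eta \sim K^2\log^2 n$, but your self-consistent inequality only needs the weaker bound $\Im Y_k \geq c\,\Im s_n(z) - O(1)$ in the regime $\Im s_n(z) \gg 1$ (which your bootstrap correctly isolates), so the argument goes through unchanged.
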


\begin{proof}  See \cite[Proposition 62]{TVbulk}.  (The main tools in the proof are the Stieltjes transform method, Lemma \ref{cool} below, and Lemma \ref{lemma:projection}.)  Again, the generalisation to variances other than $1$ on the diagonal do not cause significant changes to the argument.
\end{proof}

Finally, we recall a Berry-Ess\'een type theorem:

\begin{theorem}[Tail bounds for complex random walks]\label{bes}  Let $1 \leq N \leq n$ be integers, and let $A = (a_{i,j})_{1 \leq i \leq N; 1 \leq j \leq n}$ be an $N \times n$ complex matrix whose $N$ rows are orthonormal in $\C^n$, and obeying the incompressibility condition
\begin{equation}\label{summer}
 \sup_{1 \leq i \leq N; 1 \leq j \leq n} |a_{i,j}| \leq \sigma
\end{equation}
for some $\sigma > 0$.  Let $\zeta_1,\ldots,\zeta_n$ be independent complex random variables with mean zero, variance $\E |\zeta_j|^2$ equal to $1$, and obeying $\E |\zeta_{i} |^{3} \le C$ for some $C \geq 1$.  For each $1 \leq i \leq N$, let $S_i$ be the complex random variable
$$ S_i := \sum_{j=1}^n a_{i,j} \zeta_j$$
and let $\vec S$ be the $\C^N$-valued random variable with coefficients $S_1,\ldots,S_N$.
\begin{itemize}
\item (Upper tail bound on $S_i$)  For $t \geq 1$, we have $\P( |S_i| \geq t ) \ll \exp(-ct^2) + C \sigma$ for some absolute constant $c>0$.
\item (Lower tail bound on $\vec S$)  For any $t \leq \sqrt{N}$, one has $\P( |\vec S| \leq t) \ll  O( t/\sqrt{N} )^{\lfloor N/4\rfloor} + C N^4 t^{-3} \sigma$.
\end{itemize}
The same claim holds if one of the $\zeta_i$ is assumed to have variance $c$ instead of $1$ for some absolute constant $c>0$.
\end{theorem}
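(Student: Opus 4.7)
The plan is to handle the two parts separately using characteristic functions, which is the standard route to Berry--Ess\'een style bounds.  Throughout, the key structural input is the orthonormality of the rows of $A$, which gives $\sum_j |a_{i,j}|^2 = 1$ for each $i$ and the isometry $\sum_j |(A^*\xi)_j|^2 = |\xi|^2$ for every $\xi \in \C^N$.

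For the upper tail on $S_i$, I would view $S_i$ as an $\R^2$-valued sum of independent contributions and apply a two-dimensional Berry--Ess\'een theorem.  Its variance is $1$ by orthonormality, while its third absolute moment is bounded by
\[
\sum_j |a_{i,j}|^3 \, \E|\zeta_j|^3 \le C\sigma \sum_j |a_{i,j}|^2 = C\sigma
\]
using the incompressibility assumption \eqref{summer}.  Hence the law of $S_i$ is within Kolmogorov distance $O(C\sigma)$ of a standard complex Gaussian $G$, and the claim follows from the Gaussian tail $\P(|G|\ge t) \ll e^{-ct^2}$.

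For the lower tail on $\vec S$, I would apply an Ess\'een-type small-ball concentration inequality in $\C^N \cong \R^{2N}$.  Writing the characteristic function as
\[
\hat f_{\vec S}(\xi) = \prod_{j=1}^n \phi_{\zeta_j}\bigl((A^*\xi)_j\bigr)
\]
and Taylor-expanding each factor around the origin using the variance-$1$ and bounded third-moment hypotheses gives
\[
\hat f_{\vec S}(\xi) = e^{-|\xi|^2/2} \cdot \exp\bigl(O(C\sqrt{N}\sigma |\xi|^3)\bigr),
\]
where the Gaussian main term comes from the isometry above and the error from the estimate $\sum_j |(A^*\xi)_j|^3 \le \sqrt{N}\sigma |\xi|^3$ (obtained by combining $\max_j |(A^*\xi)_j| \le \sqrt{N}\sigma |\xi|$ with $\sum_j |(A^*\xi)_j|^2 = |\xi|^2$).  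Convolving against an appropriate smooth surrogate for the ball indicator $1_{|x|\le t}$ whose Fourier transform is localized at scale $1/t$, and then splitting the resulting Ess\'een integral into a main Gaussian piece and a Berry--Ess\'een error piece, produces the two terms in the bound: the Gaussian piece contributes $O(t/\sqrt N)^{\lfloor N/4\rfloor}$ via standard anti-concentration for the complex $N$-dimensional Gaussian, while the error piece contributes $CN^4 t^{-3}\sigma$ after integrating the cubic weight $|\xi|^3$ over $|\xi| \lesssim 1/t$.

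The main obstacle is the careful bookkeeping for the lower tail: one must choose the smoothing of $1_{|x|\le t}$ so that the Taylor expansion of $\hat f_{\vec S}$ remains valid throughout the effective frequency domain of integration, and track the precise polynomial losses arising from the volume of the ball of radius $1/t$ in $\C^N$ and from the smoothing.  The extension to one of the $\zeta_i$ having variance $c$ instead of $1$ is then immediate, since a single-entry perturbation affects the variance and third-moment computations only by bounded multiplicative constants, just as observed for Lemma \ref{lemma:projection}.
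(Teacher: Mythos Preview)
The paper does not give an independent proof here; it simply cites \cite[Theorem~41]{TVbulk} and remarks that modifying one variance is harmless.  Your characteristic-function / Berry--Ess\'een route is indeed the standard one for such statements, and your treatment of the upper tail and of the single-variance perturbation is fine.

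There is, however, a genuine gap in your lower-tail argument as written.  The hypothesis $\E|\zeta_j|^2=1$ fixes only the \emph{trace} of the real $2\times 2$ covariance matrix $\Sigma_j$ of $\zeta_j$, not $\Sigma_j$ itself.  The second-order term in $\log\phi_{\zeta_j}(w)$ is $-\tfrac12\,w^{T}\Sigma_j w$ (viewing $w\in\R^2$), so summing over $j$ and invoking the isometry $\sum_j |(A^*\xi)_j|^2=|\xi|^2$ does \emph{not} yield $-\tfrac12|\xi|^2$; the main term is $\exp(-\tfrac12\,\xi^{T}\Sigma\,\xi)$ for a covariance $\Sigma$ depending on both $A$ and the $\Sigma_j$.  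In particular, if the $\zeta_j$ are real-valued (which the hypotheses allow), $\hat f_{\vec S}(\xi)$ is identically $1$ on a nontrivial subspace of $\C^N\cong\R^{2N}$, and your Ess\'een integral over $|\xi|\lesssim 1/t$ in $\R^{2N}$ diverges.  So the step ``the Gaussian piece contributes $O(t/\sqrt{N})^{\lfloor N/4\rfloor}$'' does not follow from what you wrote.

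The exponent $\lfloor N/4\rfloor$ in the stated bound (rather than $N$ or $2N$) is a hint that one should not aim for full $2N$-dimensional Gaussian anti-concentration.  A robust way to repair your argument is to pass to a real problem: for instance, work with $\lfloor N/4\rfloor$ suitably chosen real linear functionals of $\vec S$ whose individual variances are bounded below (this is where some selection and the factor $1/4$ enter), and run the Ess\'een small-ball argument there, where non-degeneracy is guaranteed.  The cubic error and the $\sqrt{N}\sigma$ bookkeeping you outlined then go through and account for the $CN^4t^{-3}\sigma$ term.
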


\begin{proof} See \cite[Theorem 41]{TVbulk}.  Again, the modification of the variance on a single entry can be easily seen to have no substantial effect on the conclusion.
\end{proof}
\section{Asymptotics for the ESD}\label{esd-asym}

In this section we prove Theorem \ref{sdb-2}, using the Stieltjes transform method (see \cite{BS} for a general discussion of this method).  We may assume throughout that $n$ is large, since the claim is vacuous for $n$ small. 

It is known by the moment method (see e.g. \cite{BS} or \cite{baiyin2}) that with overwhelming probability, all eigenvalues of $W_n$ have magnitude at most $2+o(1)$.  Because of this, we may restrict attention to the case when $I$ lies in interval $[-3,3]$ (say).

We recall the \emph{Stieltjes transform} $s_n(z)$ of a Hermitian matrix $W_n$, defined for complex $z$ by the formula
\begin{equation}\label{sndef}
 s_n(z) := \frac{1}{n} \sum_{i=1}^n \frac{1}{\lambda_i(W_n)-z}.
\end{equation}
We also introduce the semicircular counterpart
$$ s(z) := \int_{-2}^2 \frac{1}{x-z} \rho_{sc}(x)\ dx$$
which by a standard contour integral computation can be given explicitly as
\begin{equation}\label{explicit}
s(z) = \frac{1}{2} ( - z + \sqrt{z^2-4} )
\end{equation}
where we use the branch of the square root of $z^2-4$ with cut at $[-2,2]$ which is asymptotic to $z$ at infinity.

It is well known that one can control the empirical spectral distribution $N_I$ via the Stieltjes transform.  We will use the following formalization of this principle:

\begin{lemma}[Control of Stieltjes transform implies control on ESD]\label{lemma:S-transform}
There is a positive constant $C$ such that the following holds for any Hermitian matrix $W_n$. 
Let $1/10 \geq \eta \geq 1/n$ and $L,\eps,\delta > 0$.  Suppose that one has the bound
\begin{equation}\label{soda}
 |s_{n} (z) -s(z) | \le \delta
\end{equation}
with (uniformly) overwhelming probability for all $z$ with $|\Re(z)| \leq L$ and $\Im(z) \geq \eta$.  Then for any interval $I$ in $[-L+\eps,L-\eps]$ with $|I| \geq \max( 2\eta, \frac{\eta}{\delta} \log\frac{1}{\delta} )$, one has
$$ |N_I - n \int_I \rho_{sc}(x)\ dx| \ll_\eps \delta n |I|$$
with overwhelming probability, where $N_I$ is the number of eigenvalues of $W_n$ in $I$.
\end{lemma}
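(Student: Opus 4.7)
The plan is to exploit the Poisson integral identity $\tfrac{1}{\pi}\Im s_n(E+i\eta) = \tfrac{1}{n}\sum_{i=1}^n P_\eta(E-\lambda_i)$, where $P_\eta(x) = \eta/(\pi(x^2+\eta^2))$ is the Poisson kernel, together with the corresponding identity $\tfrac{1}{\pi}\Im s(E+i\eta) = (\rho_{sc}\ast P_\eta)(E)$. These recognize $\Im s_n$ and $\Im s$ as the empirical and semicircular densities, smoothed at scale $\eta$. I would interpolate between $N_I$ and $n\int_I\rho_{sc}$ via two Poisson-smoothed analogues
\[
 \tilde N_I := \int_I \tfrac{n}{\pi}\Im s_n(E+i\eta)\,dE, \qquad \tilde N_I^{sc} := \int_I \tfrac{n}{\pi}\Im s(E+i\eta)\,dE,
\]
so that the total discrepancy $|N_I - n\int_I\rho_{sc}|$ is controlled by three pieces. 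Integrating hypothesis \eqref{soda} in $E\in I \subset [-L+\eps, L-\eps]$ already handles the middle one: $|\tilde N_I - \tilde N_I^{sc}| \le \delta n|I|/\pi$.

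By Fubini, $\tilde N_I = \sum_i \psi_\eta(\lambda_i)$ and $\tilde N_I^{sc} = n\int \rho_{sc}(E')\psi_\eta(E')\,dE'$, where for $I=[a,b]$
\[
\psi_\eta(\lambda) := \tfrac{1}{\pi}\bigl[\arctan\tfrac{b-\lambda}{\eta} - \arctan\tfrac{a-\lambda}{\eta}\bigr]
\]
is the Poisson-smoothed indicator of $I$. A direct computation shows $|\psi_\eta - \mathbf{1}_I| = O(\min(1, \eta/\dist(\lambda,\partial I)))$ throughout, with the sharper decay $O(\eta|I|/\dist(\lambda,\partial I)^2)$ at distances $\gg |I|$ from $\partial I$, and in particular $\|\psi_\eta - \mathbf{1}_I\|_{L^1(\mathbb{R})} = O(\eta\log(|I|/\eta))$. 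Combined with $\|\rho_{sc}\|_\infty = O(1)$, this gives the semicircle comparison $|\tilde N_I^{sc} - n\int_I\rho_{sc}| = O(n\eta\log(|I|/\eta))$.

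The step I expect to be the main obstacle is bounding the empirical comparison $|\tilde N_I - N_I| = |\sum_i (\psi_\eta - \mathbf{1}_I)(\lambda_i)|$, which cannot be handled by $L^\infty$-style bounds alone. I would attack this by dyadic decomposition of the eigenvalues according to their distance from $\partial I$. The crucial preliminary input is the crude bound $N_J \ll n|J|$ for any interval $J\subset[-L,L]$ of width $\ge 2\eta$, which can be read off from \eqref{soda} deterministically: since $\Im s_n(E+i\eta) \le \Im s(E+i\eta)+\delta = O(1)$, and each $\lambda_i \in [E-\eta,E+\eta]$ contributes at least $(2n\eta)^{-1}$ to the sum defining $\Im s_n(E+i\eta)$, one has $N_{[E-\eta,E+\eta]} \ll n\eta$, and the general bound follows by covering. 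On the dyadic annulus $\{\lambda : 2^k\eta \le |\lambda-a| < 2^{k+1}\eta\}$ one has $|\psi_\eta - \mathbf{1}_I| = O(2^{-k})$ while the crude bound supplies at most $O(n\cdot 2^k\eta)$ eigenvalues, contributing $O(n\eta)$ per level; summing over the $O(\log(|I|/\eta))$ levels near each endpoint of $I$, and using the sharper $\eta|I|/\dist^2$ decay to turn the far tail ($\dist(\lambda,\partial I)\gg |I|$) into a convergent geometric series, produces $|\tilde N_I - N_I| = O(n\eta\log(|I|/\eta))$. The at most $n$ eigenvalues lying outside $[-L,L]$ each sit at distance $\ge \eps$ from $\partial I$ and contribute only $O_\eps(n\eta)$ in total.

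Assembling the three estimates yields $|N_I - n\int_I\rho_{sc}| \ll_\eps \delta n|I| + n\eta\log(|I|/\eta)$. Setting $t := |I|/\eta$, the hypothesis $|I| \ge (\eta/\delta)\log(1/\delta)$ gives $t \ge \log(1/\delta)/\delta$; since $\log(t)/(\delta t)$ is decreasing for $t \ge e$ and is $O(1)$ at the critical value $t_0 = \log(1/\delta)/\delta$, we conclude $\log(|I|/\eta) \le C\delta|I|/\eta$. Thus the second error term is absorbed into the first, giving the claimed $|N_I - n\int_I\rho_{sc}| \ll_\eps \delta n|I|$.
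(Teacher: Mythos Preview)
The paper does not prove this lemma here; it simply cites \cite[Lemma 60]{TVbulk}. Your argument is correct and is precisely the standard Poisson-kernel/Stieltjes inversion approach used there: smooth $\mathbf{1}_I$ at scale $\eta$, control the smoothed discrepancy by integrating the hypothesis \eqref{soda}, and control the smoothing errors via the a priori bound $N_{[E-\eta,E+\eta]} \ll n\eta$ extracted from $\Im s_n(E+i\eta)=O(1)$ together with a dyadic decomposition. The final calculus step absorbing $n\eta\log(|I|/\eta)$ into $\delta n|I|$ is also handled correctly.

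One small remark: the hypothesis \eqref{soda} is stated pointwise in $z$ with uniformly overwhelming probability, whereas your argument implicitly uses it on a continuum of real parts $E$. This is harmless---since $|\partial_z s_n(z)| \le \eta^{-2} \le n^2$ in the relevant region, discretizing $E$ at mesh $n^{-3}$ and union-bounding over the polynomially many lattice points upgrades the pointwise statement to a simultaneous one---but it is worth saying explicitly.
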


\begin{proof} See \cite[Lemma 60]{TVbulk}.
\end{proof}

As a consequence of this lemma (with $L=4$ and $\eps=1$, say), we see that Theorem \ref{sdb-2} follows from

\begin{theorem}[Concentration for the Stieltjes transform up to edge]\label{stieltjes}   Consider a  random Hermitian matrix $M_n = (\zeta_{ij})_{1 \leq i,j \leq n}$ whose upper-triangular entries are independent with mean zero and variance $1$, with variance $c$ on the diagonal for some absolute constant $c>0$, and such that $|\zeta_{ij}| \leq K$ almost surely for all $i,j$ and some $K \geq 1$.  Set $W_n := \frac{1}{\sqrt{n}} M_n$. 
Let $0 < \delta < 1/2$ (which can depend on $n$), and suppose that $K = o( n^{1/2} \delta^2 )$.
Then \eqref{soda} holds with (uniformly) overwhelming probability for all $z$ with $|\Re(z)| \leq 4$ and 
$$ \Im(z) \geq \frac{K^2 \log^{3.5} n}{\delta^8 n}.$$
\end{theorem}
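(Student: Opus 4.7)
The proof follows the Stieltjes transform / Schur complement scheme used for Theorem~\ref{sdb} in \cite{TVbulk}, but with a more careful tracking of the error terms that lets the argument be pushed up to the edge. Fix $z = E + i\eta$ with $|E| \leq 4$ and $\eta$ in the allowed range. Let $W_{n,k}$ be the $(n-1)\times(n-1)$ principal minor of $W_n$ obtained by deleting row and column $k$, and let $Y_k \in \C^{n-1}$ be the $k$-th column of $W_n$ with the diagonal entry removed. The Schur complement identity gives
\begin{equation*}
R_{kk}(z) := ((W_n - z)^{-1})_{kk} = \frac{1}{\zeta_{kk}/\sqrt{n} - z - Y_k^*(W_{n,k}-z)^{-1}Y_k},
\end{equation*}
and averaging yields $s_n(z) = \frac{1}{n}\sum_k R_{kk}(z)$.

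The heart of the proof is a concentration bound for the quadratic form $Y_k^*(W_{n,k}-z)^{-1}Y_k$ around its mean $\frac{1}{n}\tr((W_{n,k}-z)^{-1})$. Spectrally decomposing the resolvent and applying Lemma~\ref{lemma:projection} to each eigenspace projection (with Theorem~\ref{bes} used to handle third-moment tails), and invoking the identities $\|(W_{n,k}-z)^{-1}\|_{op} = 1/\eta$ and $\|(W_{n,k}-z)^{-1}\|_F^2 = (n-1)\,\Im s_{n,k}(z)/\eta$ together with the crude bound of Lemma~\ref{lemma:smallsc}, one obtains with overwhelming probability
\begin{equation*}
Y_k^*(W_{n,k}-z)^{-1}Y_k = \tfrac{1}{n}\tr\bigl((W_{n,k}-z)^{-1}\bigr) + O\!\left(\tfrac{K\sqrt{\log n}}{\sqrt{n}}\sqrt{\tfrac{\Im s_{n,k}(z)}{\eta}} + \tfrac{K^2\log n}{n\eta}\right).
\end{equation*}
The standard rank-one trace identity gives $\frac{1}{n}\tr((W_{n,k}-z)^{-1}) = s_n(z) + O(1/(n\eta))$. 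Combining these with the negligible $O(K/\sqrt{n})$ from $\zeta_{kk}/\sqrt{n}$ and averaging over $k$ produces the approximate self-consistent equation
\begin{equation*}
s_n(z)\bigl(z + s_n(z)\bigr) = -1 + O(\CE(z)), \qquad \CE(z) := \tfrac{K\sqrt{\log n}}{\sqrt{n}}\sqrt{\tfrac{\Im s_{n}(z)}{\eta}} + \tfrac{K^2\log n}{n\eta} + \tfrac{K}{\sqrt{n}}.
\end{equation*}

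Since $s(z)$ satisfies $s(z)(z+s(z)) = -1$ exactly, subtracting gives $(s_n - s)(s_n + s + z) = O(\CE)$, and writing $s_n + s + z = \sqrt{z^2-4} + (s_n - s)$ produces the stability dichotomy
\begin{equation*}
|s_n(z) - s(z)| \ll \min\!\left(\tfrac{\CE(z)}{|\sqrt{z^2-4}|},\,\sqrt{\CE(z)}\right).
\end{equation*}
To convert this into a self-contained estimate I would run a bootstrap in $\eta$: at $\eta \asymp 1$ the bound $|s_n - s| = o(1)$ is immediate from the moment-method estimate $\|W_n\|_{op} = 2 + o(1)$. I then descend $\eta$ in small multiplicative steps, using the Lipschitz bound $|s_n'(z)| \leq \eta^{-2}$ to pass from a polynomial grid of $z$'s to all $z$'s in the target range, and using the previous step's bound to produce a working estimate for $\Im s_n$ that feeds into $\CE$; continuity in $z$ then forces the correct branch of the dichotomy.

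The main obstacle is the stability analysis near the edge, where $|\sqrt{z^2-4}| \sim \sqrt{\eta + \bigl||E|-2\bigr|}$ can be as small as $\sqrt{\eta}$ and the ``good'' branch of the dichotomy becomes tight. The prescribed threshold $\eta \geq K^2 \log^{3.5} n / (\delta^8 n)$ is calibrated exactly so that, once the bootstrap has propagated the bound $|s_n - s| \leq \delta$ to the previous scale (giving $\Im s_n \ll \Im s + \delta$ and hence $\Im s_n \ll 1$ crudely, or $\Im s_n \ll \sqrt{\eta+||E|-2|}$ in the edge regime), one has $\CE \ll \delta \cdot |\sqrt{z^2-4}|$, which is precisely what is needed to reproduce the bound at the next scale. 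The assumption $K = o(n^{1/2}\delta^2)$ ensures that the subleading $K/\sqrt{n}$ and $K^2\log n/(n\eta)$ terms never dominate the main concentration term in $\CE$.
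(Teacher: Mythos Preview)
Your scheme---Schur complement, concentration of the quadratic form, approximate fixed-point equation, stability/branch selection---is the paper's, but the two key steps are executed differently. For the concentration of $Y_k^*(W_{n,k}-z)^{-1}Y_k$ you aim for a Ward-identity error $O\bigl(\tfrac{K\sqrt{\log n}}{\sqrt n}\sqrt{\Im s_{n,k}/\eta}\bigr)$ and then close via a bootstrap in $\eta$; the paper instead proves directly (Proposition~\ref{yk-conc}) that $Y_k=s_n(z)+o(\delta^2)$ with no $\Im s_n$ in the error, by writing $Y_k-\E(Y_k\mid W_{n,k})=\sum_j R_j/(\lambda_j-z)$ with $R_j=|u_j^*a_k|^2-1/n$, splitting into dyadic shells $\{j:(1+\alpha)^l\delta'\eta<|\lambda_j-\Re z|\leq(1+\alpha)^{l+1}\delta'\eta\}$ plus a core, and applying Lemma~\ref{lemma:projection} to each \emph{block} of eigenvectors together with the eigenvalue count of Lemma~\ref{lemma:smallsc}. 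The auxiliary parameters $\delta'\sim\delta^2\log^{-0.01}n$, $\alpha\sim\delta^2\log^{-1.01}n$ make the shells sum to $o(\delta^2)$ exactly when $\eta n\geq K^2\log^{3.5}n/\delta^8$, which is where the stated threshold comes from. With an $o(\delta^2)$ error already in hand the paper needs no bootstrap: it obtains $(s_n+z/2)^2=(z^2-4)/4+o(\delta^2)$ and a single continuity argument over the whole region selects the correct branch $s_n=s+o(\delta)$. Your route (closer to the later Erd\H{o}s--Yau--Yin local-law methodology) also reaches the goal, but note two small slips: Lemma~\ref{lemma:projection} applied ``to each eigenspace projection'' together with Theorem~\ref{bes} does not by itself produce the $\sqrt{\Im s/\eta}$ error---that genuinely requires either a quadratic-form large-deviation inequality or precisely the dyadic block argument the paper runs; and at the edge, where $|\sqrt{z^2-4}|\sim\sqrt\eta$, it is the $\sqrt{\CE}$ branch of your dichotomy (i.e.\ $\CE=o(\delta^2)$), not the condition $\CE\ll\delta|\sqrt{z^2-4}|$, that actually delivers $|s_n-s|\leq\delta$.
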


The remainder of this section is devoted to proving Theorem \ref{stieltjes}.  Fix $z$ as in Theorem \ref{stieltjes}, thus $|\Re(z)| \leq 4$ and $\Im(z) = \eta$, where
\begin{equation}\label{etan}
 \eta n \geq \frac{K^2 \log^{3.5} n}{\delta^8}.
 \end{equation}
Our objective is to show \eqref{soda} with (uniformly) overwhelming probability.

As in previous works (in particular, \cite{ESY3,TVbulk}), the key is to exploit the fact that when $\Im z > 0$, $s(z)$ is the unique solution to the equation
\begin{equation}\label{sz}
s(z) + \frac{1}{s(z)+z} = 0
\end{equation}
with $\Im s(z) > 0$; this is immediate from \eqref{explicit}.

We now seek a similar relation for $s_n$.  Note that $\Im s_n(z) > 0$ by \eqref{sndef}.  We use the following standard matrix identity (cf. \cite[Lemma 39]{TVbulk}, or \cite[Chapter 11]{BS}):

\begin{lemma}\label{cool} We have
\begin{equation}\label{ssz}
 s_n(z) = \frac{1}{n} \sum_{k=1}^n \frac{1}{\frac{1}{\sqrt{n}} \zeta_{kk} - z - Y_k}
\end{equation}
where 
$$Y_k := a_k^* (W_{n,k} - zI)^{-1} a_k,$$
$W_{n,k}$ is the matrix $W_n$ with the $k^{th}$ row and column removed, and $a_k$ is the $k^{th}$ row of $W_n$ with the $k^{th}$ element removed.
\end{lemma}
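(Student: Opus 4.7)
The plan is to derive the identity directly from the block matrix (Schur complement) inversion formula. First I would rewrite the Stieltjes transform in resolvent form: from the spectral decomposition of the Hermitian matrix $W_n$,
$$s_n(z) \;=\; \frac{1}{n}\sum_{i=1}^n \frac{1}{\lambda_i(W_n)-z} \;=\; \frac{1}{n}\trace (W_n - zI)^{-1} \;=\; \frac{1}{n}\sum_{k=1}^n \bigl[(W_n-zI)^{-1}\bigr]_{kk}.$$
This reduces matters to computing the diagonal entries $[(W_n-zI)^{-1}]_{kk}$ of the resolvent, and \eqref{ssz} will follow from the identity
$$\bigl[(W_n-zI)^{-1}\bigr]_{kk} \;=\; \frac{1}{\tfrac{1}{\sqrt n}\zeta_{kk} - z - Y_k}.$$

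To prove this pointwise identity, I would conjugate by the permutation matrix that swaps row/column $k$ with row/column $n$; this permutation is orthogonal, does not affect the $(k,k)$ diagonal entry of the resolvent, and puts $W_n - zI$ into the $2\times 2$ block form
$$\begin{pmatrix} \tfrac{1}{\sqrt n}\zeta_{kk} - z & a_k^* \\ a_k & W_{n,k} - zI \end{pmatrix},$$
where $a_k$ is the $k$-th column of $W_n$ with its $k$-th entry removed. Since $\Im(z) > 0$ and $W_{n,k}$ is Hermitian, the eigenvalues of $W_{n,k} - zI$ all have imaginary part $-\Im(z) \neq 0$, so the bottom-right block is invertible, and the standard Schur complement formula for the $(1,1)$-entry of the inverse of a Hermitian block matrix yields
$$\bigl[(W_n-zI)^{-1}\bigr]_{kk} \;=\; \Bigl(\tfrac{1}{\sqrt n}\zeta_{kk} - z - a_k^*(W_{n,k}-zI)^{-1}a_k\Bigr)^{-1} \;=\; \frac{1}{\tfrac{1}{\sqrt n}\zeta_{kk} - z - Y_k}.$$

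Summing this equality over $k$ and dividing by $n$ produces exactly \eqref{ssz}. There is no real obstacle in this argument: the lemma is a standard computation in the Stieltjes transform machinery, and the only thing requiring a (trivial) verification is the invertibility of $W_{n,k}-zI$, which is automatic for $z$ with $\Im(z)>0$. I expect this lemma to be used downstream together with Lemma \ref{lemma:projection} and Theorem \ref{bes} to control $Y_k$ in terms of the Stieltjes transform of the minor, so that the fixed-point equation \eqref{sz} can be stably propagated to $s_n(z)$.
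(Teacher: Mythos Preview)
Your proposal is correct and follows the same approach as the paper: the paper's proof simply invokes Schur's complement to identify $\bigl(\tfrac{1}{\sqrt n}\zeta_{kk}-z-Y_k\bigr)^{-1}$ as the $k^{\mathrm{th}}$ diagonal entry of $(W_n-zI)^{-1}$ and then takes the trace. Your version spells out the block form, the permutation step, and the invertibility of $W_{n,k}-zI$ for $\Im z>0$, but these are exactly the details behind the paper's one-line appeal to Schur's complement.
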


\begin{proof} By Schur's complement, $\frac{1}{\zeta_{kk} - z - a_k^* (W_k - zI)^{-1} a_k}$ is the $k^{th}$ diagonal entry of $(W-zI)^{-1}$.  Taking traces, one obtains the claim.
\end{proof}

\begin{proposition}[Concentration of $Y_k$]\label{yk-conc}  For each $1 \leq k \leq n$, one has $Y_k = s_n(z) + o(\delta^2)$ with overwhelming probability.
\end{proposition}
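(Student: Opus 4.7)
The plan is to condition on the minor $W_{n,k}$, compute the conditional mean of $Y_k$ as a normalised trace of the resolvent, compare that trace to $s_n(z)$ by Cauchy interlacing, and control the conditional fluctuation by a dyadic eigenspace decomposition combined with Lemma \ref{lemma:projection}. Writing $P := (W_{n,k}-zI)^{-1}$ and $\xi_k := \sqrt{n}\,a_k$, one has $Y_k = \tfrac{1}{n}\xi_k^* P\xi_k$, and crucially $\xi_k$ is independent of $W_{n,k}$.

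For the conditional mean, I would compute $\E[Y_k \mid W_{n,k}] = \tfrac{1}{n}\tr P = \tfrac{n-1}{n} s_{n-1,k}(z)$, where $s_{n-1,k}$ is the Stieltjes transform of $W_{n,k}$. A standard integration-by-parts, using Cauchy interlacing \eqref{interlace} to bound the pointwise discrepancy between the cumulative empirical distributions of $W_n$ and $W_{n,k}$ by $1$, yields the deterministic estimate $|s_n(z) - \tfrac{n-1}{n} s_{n-1,k}(z)| \ll 1/(n\eta)$, which is already $o(\delta^2)$ by the hypothesis \eqref{etan} on $\eta n$. It therefore suffices to prove $|Y_k - \tfrac{1}{n}\tr P| = o(\delta^2)$ with overwhelming probability.

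Next I would establish an a priori bound $\Im s_{n-1,k}(z) = O(\log n)$, and hence $\|P\|_{HS}^2 \ll n\log n/\eta$ with $\|P\|_{op} \leq 1/\eta$, by applying Lemma \ref{lemma:smallsc} to $W_{n,k}$ on each dyadic interval around $\Re z$ of width $\geq \eta$ (the hypothesis \eqref{etan} comfortably enforces $\eta \geq K^2\log^2 n/n$) and summing geometrically. Conditional on this good event, I would diagonalise $W_{n,k} = \sum_j \lambda_j u_j u_j^*$ and split $P = P_r + i P_i$ into its Hermitian real and imaginary parts, which share the eigenbasis $\{u_j\}$. Partition the spectrum into signed dyadic shells $G_m^\pm := \{j : \pm(\lambda_j - \Re z) \in [2^{m-1}\eta, 2^m\eta]\}$, together with an innermost shell $G_0 := \{j : |\lambda_j - \Re z| < \eta\}$; on shell $G_m^\pm$ the coefficient of $P_r$ is $\asymp \pm 2^{-m}/\eta$ and that of $P_i$ is $\asymp 2^{-2m}/\eta$. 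The bulk of the fluctuation of $\xi_k^* P \xi_k$ then reduces, up to a correction for within-shell weight variation, to controlling $\sum_{j\in G_m^\pm}(|u_j^*\xi_k|^2 - 1) = \|\pi_{H_m^\pm}\xi_k\|^2 - |G_m^\pm|$, where $H_m^\pm := \Span\{u_j : j\in G_m^\pm\}$. Lemma \ref{lemma:projection} gives each such quantity as $O(\sqrt{|G_m^\pm|}\,K\log n + K^2\log^2 n)$ with overwhelming probability; combined with the cardinality bound $|G_m^\pm| \ll n\cdot 2^m\eta$ from Lemma \ref{lemma:smallsc} and summed geometrically in $m$, the total fluctuation of $\xi_k^* P \xi_k - \tr P$ is $O(K\log^{O(1)} n\sqrt{n/\eta} + K^2\log^{O(1)} n/\eta)$; after dividing by $n$ and applying \eqref{etan}, this is $o(\delta^2)$, as required.

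The main obstacle I anticipate is the indefiniteness of $P_r$: Lemma \ref{lemma:projection} controls $\|\pi_H \xi\|^2$ and is thus naturally suited only to positive semidefinite Hermitian forms. Handling $P_r$ requires the \emph{signed} dyadic split $G_m^\pm$ (rather than the unsigned $G_m$), together with a finer sub-partition of each shell to absorb the within-shell variation of the weights $(\lambda_j-\Re z)/|\lambda_j-z|^2$ into the error estimate without disturbing the geometric convergence in $m$. The positive definite $P_i$, by contrast, can be treated directly by unsigned dyadic shells, giving the bound on $\Im Y_k - \tfrac{1}{n}\tr P_i$ more cleanly.
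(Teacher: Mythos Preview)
Your strategy is the paper's: condition on $W_{n,k}$, identify $\E[Y_k\mid W_{n,k}]=\tfrac{n-1}{n}s_{n,k}(z)$ and compare it to $s_n(z)$ via interlacing (giving $O(1/(n\eta))=o(\delta^2)$), then control the fluctuation $\sum_j R_j/(\lambda_j-z)$ with $R_j:=|u_j^*a_k|^2-1/n$ by a spectral shell decomposition around $x=\Re z$, the projection Lemma~\ref{lemma:projection} (yielding the paper's bounds \eqref{rjb-0} on $\bigl|\sum_{\text{shell}}R_j\bigr|$ and \eqref{rjb} on $\sum_{\text{shell}}|R_j|$), and the counting Lemma~\ref{lemma:smallsc}. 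The split $P=P_r+iP_i$ is not needed: the paper works directly with the complex weight $1/(\lambda_j-z)$, noting only that each annulus $\{j:|\lambda_j-x|\in\text{shell}\}$ is a union of at most two index intervals (your $G_m^\pm$), to each of which Lemma~\ref{lemma:projection} applies. The obstacle is not really indefiniteness of $P_r$ but the non-constancy of the weight on a shell, which affects $P_i$ just as much.

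The one point you leave unresolved is the one you flag. With ratio-$2$ shells the within-shell weight variation contributes, via \eqref{rjb}, an amount of order $\tfrac{1}{2^m\eta}\cdot\tfrac{|G_m|}{n}\asymp 1$ per shell and hence $\asymp\log n$ in total, which is not $o(\delta^2)$; your stated final bound $O(K\log^{O(1)}n/\sqrt{n\eta})$ omits this cost. The paper's implementation of your ``finer sub-partition'' is to take shells of ratio $1+\alpha$ for a small auxiliary parameter $\alpha$ (together with a separate inner radius $\delta'\eta$), so that the fluctuation is bounded by
\[
\frac{K\log n}{\sqrt{\alpha\,\delta'\,\eta n}}\;+\;\alpha\log n,
\]
the first term coming from \eqref{rjb-0} summed geometrically over shells, the second from the within-shell variation times \eqref{rjb} summed over $O(\log n/\alpha)$ shells. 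The explicit choices $\alpha=\delta^2\log^{-1.01}n$ and $\delta'=\delta^2\log^{-0.01}n$ make both terms $o(\delta^2)$, and the hypothesis $\eta n\geq K^2\log^{3.5}n/\delta^8$ is calibrated precisely to permit this. So your plan is correct, but the sub-partition scale must depend on $\delta$, and the final error has two competing terms rather than the single square-root term you wrote down.
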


\begin{proof} Fix $k$, and write $z=x+\sqrt{-1}\eta$. 

The entries of $a_k$ are independent of each other and of $W_{n,k}$, and have mean zero and variance $\frac{1}{n}$.  By linearity of expectation we thus have, on conditioning on $W_{n,k}$
$$ \E(Y_k|W_{n,k}) = \frac{1}{n} \tr (W_{n,k}-zI)^{-1} = (1 - \frac{1}{n}) s_{n,k}(z)$$
where
$$ s_{n,k}(z) := \frac{1}{n-1} \sum_{i=1}^{n-1} \frac{1}{\lambda_i(W_{n,k})-z}$$
is the Stieltjes transform of $W_{n,k}$.  From the Cauchy interlacing law \eqref{interlace} and \eqref{etan}, we have 
$$ s_n(z) - (1-\frac{1}{n}) s_{n,k}(z) = O\left( \frac{1}{n} \int_\R \frac{1}{|x-z|^2}\ dx \right) = O\left( \frac{1}{n \eta} \right) =o(\delta^2).$$
It follows that 
$$
 \E(Y_k|W_{n,k}) = s_n(z) + o(\delta^2)
$$
and so it will remain to show the concentration estimate
$$ Y_k - \E(Y_k|W_{n,k}) = o(\delta^2) $$
with overwhelming probability.  

Rewriting $Y_k$, it suffices to show that
\begin{equation}\label{rje}
\sum_{j=1}^{n-1} \frac{R_j}{\lambda_j(W_{n,k})-(x+\sqrt{-1}\eta)} = o(\delta^2) 
\end{equation}
with overwhelming probability, where $R_j :=| u_j(W_{n,k})^* a_k|^2-1/n$.

Let $1 \leq i_- < i_+ \leq n$, then
$$ \sum_{i_- \leq j \leq i_+} R_j = \| P_H a_k \|^2 - \frac{\dim(H)}{n}$$
where $H$ is the space spanned by the $u_j(W_{n,k})^*$ for $i_- \leq j \leq i_+$.  From Lemma \ref{lemma:projection} and the union bound,
 we conclude that with overwhelming probability
\begin{equation}\label{rjb-0}
 |\sum_{i_- \leq j \leq i_+} R_j| \ll \frac{\sqrt{i_+ - i_-} K \log n + K^2 \log^2 n}{n}.
\end{equation}
By the triangle inequality, this implies that
$$
 \sum_{i_- \leq j \leq i_+} \| P_H a_k \|^2  \ll \frac{i_+-i_-}{n} + \frac{\sqrt{i_+ - i_-} K \log n + K^2 \log^2 n}{n}$$
 and hence by a further application of the triangle inequality
\begin{equation}\label{rjb}
\sum_{i_- \leq j \leq i_+} |R_j| \ll \frac{(i_+ - i_-) + K^2 \log^2 n}{n}
\end{equation}
with overwhelming probability.  

The plan is to use \eqref{rjb-0} and \eqref{rjb} to establish \eqref{rje}.  Accordingly, we split the LHS of \eqref{rje}, into several subsums according to the distance 
$|\lambda_j -x|$.  Lemma \ref{lemma:smallsc} provides a sharp estimate on the number of 
terms of each subsum which will allow us to obtain a good upper bound on the absolute value. 

We turn to the details.  From \eqref{etan} we can choose two auxiliary parameters
$0 < \delta', \alpha < 1$ such that 
\begin{equation} \label{eqn:condition-delta} 
\begin{split}
\delta' &=o(\delta^2); \\
\alpha \log n &=o(\delta^2); \\
\alpha \delta' \eta n &\geq K^2 \log^2 n; \\
\frac{K \log n} {\sqrt{\alpha \delta' \eta n}} &= o(\delta^2).
\end{split} 
\end{equation} 
Indeed, one could set $\delta' := \delta^2 \log^{-0.01} n$ and $\alpha := \delta^2 \log^{-1.01} n$ and use \eqref{etan}.

Fix such parameters, and consider the contribution to \eqref{rje} of the indices $j$ for which
$$ |\lambda_j(W_n) - x |  \le \delta' \eta.$$
By Lemma \ref{lemma:smallsc} and \eqref{eqn:condition-delta}, the interval of $j$ for which this occurs has cardinality $O(\delta' \eta n) $ (with overwhelming probability).  On this interval, the quantity $\frac{1}{\lambda_j(W_{n,k})-(x+\sqrt{-1}\eta)}$ has magnitude $O(\frac{1}{\eta})$.
 Applying \eqref{rjb} (and \eqref{eqn:condition-delta}), we see that the contribution of this case is thus
$$ \ll \frac{1}{\eta} \frac{ \delta' \eta n }{n} = o(\delta^2)$$
which is acceptable.

Next, we consider the contribution to \eqref{rje} of those indices $j$ for which
$$ (1+\alpha)^l \delta' \eta < |\lambda_j(W_n) - x | \leq (1+\alpha)^{l+1} \delta' \eta$$
for some integer $0 \leq l \ll \log n/\alpha$, and then sum over $l$.  By Lemma \ref{lemma:smallsc} and \eqref{eqn:condition-delta}, the set of $j$ for which this occurs is contained (with overwhelming probability) in at most two intervals of cardinality $O( (1+\alpha)^l \alpha \delta' \eta n )$.  On each of these intervals, the quantity $\frac{1}{\lambda_j(W_{n,k})-(x+\sqrt{-1}\eta)}$ has magnitude $O\left( \frac{1}{(1+\alpha)^l \delta' \eta} \right)$ and fluctuates by $O\left( \frac{\alpha}{(1+\alpha)^l \delta' \eta} \right)$.  Applying \eqref{rjb-0}, \eqref{rjb} (and noting that $(1+\alpha)^l \alpha \delta' \eta n $ exceeds $K^2 \log^2 n$, by \eqref{eqn:condition-delta}) we see that the contribution of a single $l$ to \eqref{rje} is at most
$$ \ll \frac{1}{(1+\alpha)^l \delta' \eta} \frac{ \sqrt{\alpha (1+\alpha)^l \delta' \eta n} K \log n }{n} + 
\frac{\alpha}{(1+\alpha)^l \delta' \eta} \frac{ \alpha (1+\alpha)^l \delta' \eta n }{n}$$
which simplifies to
$$ \ll \alpha (1+\alpha)^{-l/2} \frac{K \log n}{\sqrt{\alpha \delta' \eta n}} + \alpha^2.$$
Summing over $l$ we obtain a bound of
$$ \ll \frac{K \log n}{\sqrt{\alpha \delta' \eta n}} + \alpha \log n$$
which is acceptable by \eqref{eqn:condition-delta}.
\end{proof}

We now conclude the proof of Theorem \ref{sdb-2}.    By hypothesis, 
$$|\frac{1}{\sqrt{n}} \zeta_{kk}| \leq K/\sqrt{n} = o( \delta^2 )$$
almost surely.  Inserting these bounds into \eqref{ssz}, we see that
with overwhelming probability 
$$
s_n(z) + \frac{1}{n} \sum_{k=1}^n \frac{1}{s_n(z) + z + o(\delta^2)} = 0.$$
By the triangle inequality (and square rooting the $o()$ decay), we can assume that either the error term $o(\delta^2)$ is $o( \delta^2 |s_n(z)+z| )$, or that $|s_n(z)+z|$ is $o(1)$.  Suppose the former holds.  Then by Taylor expansion
$$ \frac{1}{s_n(z) + z + o(\delta^2)} = \frac{1}{s_n(z)+z} + o( \delta^2 ) $$
and thus
$$ s_n(z) + \frac{1}{s_n(z)+z} = o( \delta^2 ).$$
If we assume $|z| \leq 10$ (say), we conclude that $|s_n(z)| \leq 100$.  Multiplying out by $s_n(z)+z$ and rearranging, we obtain
$$ (s_n(z) + \frac{z}{2})^2 = \frac{z^2-4}{4} + o(\delta^2).$$
Thus
$$ s_n(z) + \frac{z}{2} = \pm \sqrt{\frac{z^2-4}{4}} + o(\delta)$$
(treating the case when $z^2-4=o(\delta^2)$ separately).  

To summarise, we have shown (with overwhelming probability) that in the region
$$ |z| \leq 10; \quad |\Re(z)| \leq 4; \quad \Im(z) \geq \frac{K^2 \log^{3.5} n}{\delta^8 n}$$
that one either has $s_n(z) = s(z) + o(\delta)$, $s_n(z) = - z - s(z) + o(1) = s(z) - \sqrt{z^2-4} + o(1)$, or $|s_n(z)+z| = o(1)$.   It is not hard to see that the first two cases are disconnected from the third (for $n$ large enough) in this region, because $s(z)=\frac{-1}{s(z)+z}$ is bounded away from zero, as is $s(z)+z = \frac{-1}{s(z)}$.  Furthermore, the first and second possibilities are also disconnected from each other except when $z^2-4 = o(\delta^2)$.  Also, the second and third possibilities can only hold for $\Im(z)=o(1)$ since $s_n(z)$ and $z$ both have positive real part.  A continuity argument thus shows that the first possibility must hold throughout the region except when $z^2-4 = o(\delta^2)$, in which case either the first or second possibility can hold; but in that region, the first and second possibility are equivalent, and \eqref{soda} follows.  The proof of Theorem \ref{sdb-2} is now complete.

\section{Delocalization of eigenvectors}\label{deloc-sec}

Without loss of generalization, we can assume that the entries are continuously distributed. 
 Having established Theorem \ref{sdb-2}, we now use this theorem to establish Proposition \ref{deloc2}.

Let $M_n$ obey condition {\bf C0}.  Then by Markov's inequality, one has $|\zeta_{ij}| \ll \log^{O(1)} n$ with overwhelming probability (here and in the sequel we allow implied constants in the $O()$ notation to depend on the constants $C,C'$ in \eqref{ued}).  By conditioning the $\zeta_{ij}$ to this event\footnote{Strictly speaking, this distorts the mean and variance of $\zeta_{ij}$ by an exponentially small amount, but one can easily check that this does not significantly impact any of the arguments in this section.}, we may thus assume that 
\begin{equation}\label{zak}
|\zeta_{ij}| \leq K
\end{equation}
almost surely for some $K = O( \log^{O(1)} n)$.

Fix $1 \leq i \leq n$; by symmetry we may take $i \geq n/2$.  By the union bound and another application of symmetry, it suffices to show that
$$ |u_i(M_n)^* e_1| \ll n^{-1/2} \log^{O(1)} n$$
with overwhelming probability.

To compute $u_i(M_n)^* e_1$ we use the following identity from \cite{ESY1} (see also \cite[Lemma 38]{TVbulk}):

\begin{lemma} \label{lemma:firstcoordinate} Let 
$$ A_n = \begin{pmatrix} a & X^* \\ X & A_{n-1} \end{pmatrix}$$
be a $n \times n$ Hermitian matrix for some $a \in \R$ and $X \in \C^{n-1}$, and let $\begin{pmatrix} x \\ v \end{pmatrix}$ be a unit eigenvector of $A$ with eigenvalue $\lambda_i(A)$, where $x \in \C$ and $v \in \C^{n-1}$.  Suppose that none of the eigenvalues of $A_{n-1}$ are equal to $\lambda_i(A)$.  Then
$$|x|^2 =  \frac{1}{1 + \sum_{j=1}^{n-1}  (\lambda_j(A_{n-1})-\lambda_i(A_n))^{-2} |u_j(A_{n-1})^* X|^2}, $$ 
where $u_j(A_{n-1})$ is a unit eigenvector corresponding to the eigenvalue $\lambda_j(A_{n-1})$.
\end{lemma}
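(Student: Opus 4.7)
The plan is to derive the formula directly from the block form of the eigenvalue equation. Splitting the identity $A_n \begin{pmatrix} x \\ v \end{pmatrix} = \lambda_i(A_n) \begin{pmatrix} x \\ v \end{pmatrix}$ into its scalar and vector parts, the vector part reads $A_{n-1} v + xX = \lambda_i(A_n) v$. Since by hypothesis $\lambda_i(A_n)$ is not an eigenvalue of $A_{n-1}$, the operator $A_{n-1} - \lambda_i(A_n) I$ is invertible and I can solve for $v$ to obtain $v = -x (A_{n-1} - \lambda_i(A_n) I)^{-1} X$. Note that this forces $x \neq 0$, since otherwise $v$ would be a $\lambda_i(A_n)$-eigenvector of $A_{n-1}$, contradicting the hypothesis.

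Next I would substitute this expression for $v$ into the unit-norm constraint $|x|^2 + \|v\|^2 = 1$, which gives $|x|^2 \bigl(1 + \| (A_{n-1} - \lambda_i(A_n) I)^{-1} X \|^2\bigr) = 1$. Expanding the resolvent in the orthonormal eigenbasis $u_1(A_{n-1}), \dots, u_{n-1}(A_{n-1})$ diagonalizes the operator, with eigenvalues $(\lambda_j(A_{n-1}) - \lambda_i(A_n))^{-1}$; Parseval's identity then converts the squared norm of the resolvent applied to $X$ into the sum $\sum_{j=1}^{n-1} (\lambda_j(A_{n-1}) - \lambda_i(A_n))^{-2} |u_j(A_{n-1})^* X|^2$. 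Rearranging produces the claimed identity, and the scalar part of the eigenvalue equation is never needed.

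The argument is purely elementary linear algebra, so there is no serious obstacle to overcome. The only delicate point is ensuring that the resolvent is well-defined, which is precisely the hypothesis that no eigenvalue of $A_{n-1}$ equals $\lambda_i(A_n)$; without this assumption the right-hand side of the claimed identity would not even be meaningful, as one of the summands in the denominator would be infinite.
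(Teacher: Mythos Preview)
Your proof is correct and follows essentially the same approach as the paper: write out the lower block of the eigenvector equation, invert $A_{n-1}-\lambda_i(A_n)I$ to express $v$ in terms of $x$, insert into the unit-norm identity, and expand the resolvent norm in the eigenbasis of $A_{n-1}$. The paper first subtracts $\lambda_i(A_n)I$ to reduce to the case $\lambda_i(A_n)=0$, whereas you carry the eigenvalue through explicitly, but this is a purely cosmetic difference.
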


\begin{proof}  By subtracting $\lambda_i(A) I$ from $A$ we may assume $\lambda_i(A) =0$.  The eigenvector equation then gives
$$ x X + A_{n-1} v' = 0,$$
thus
$$ v' = - x A_{n-1}^{-1} X.$$
Since $\|v'\|^2 + |x|^2 = 1$, we conclude
$$ |x|^2 (1 + \|A_{n-1}^{-1} X\|^2 ) = 1.$$
Since $\|A_{n-1}^{-1} X\|^2 = \sum_{j=1}^{n-1}  (\lambda_j(A_{n-1}))^{-2} |u_j(A_{n-1})^* X|^2$, the claim follows.
\end{proof}

Let $M_{n-1}$ be the bottom right $n-1 \times n-1$ minor of $M_n$. As we are assuming that the coefficients of $M_n$ are continuously distributed, we see almost surely that none of the eigenvalues of $M_{n-1}$ are equal to $\lambda_i(M_n)$.  We may thus apply Lemma \ref{lemma:firstcoordinate} and conclude that
$$ |u_i(M_n)^* e_1|^2 =  \frac{1}{1 + \sum_{j=1}^{n-1}   \frac{|u_j(M_{n-1})^* X|^2}{(\lambda_j(M_{n-1})-\lambda_i(M_n))^{2}}}$$
where $X$ is the bottom left $n-1 \times 1$ vector of $M_n$ (and thus has entries $\zeta_{j1}$ for $1 < j \leq n$).  It thus suffices to show that
$$ \sum_{j=1}^{n-1}  \frac{|u_j(M_{n-1})^* X|^2}{(\lambda_j(M_{n-1})-\lambda_i(M_n))^{2}} \gg n \log^{-O(1)} n$$
with overwhelming probability.

It will be convenient to eliminate the exponent $2$ in the denominator, as follows.
From Lemma \ref{lemma:projection}, one has $|u_j(M_{n-1})^* X| \ll \log^{O(1)} n$ with overwhelming probability for each $j$ (and hence for all $j$, by the union bound).  It thus suffices to show that
$$ \sum_{j=1}^{n-1}  \frac{|u_j(M_{n-1})^* X|^4}{(\lambda_j(M_{n-1})-\lambda_i(M_n))^{2}} \gg n \log^{-O(1)} n$$
with overwhelming probability.  By the Cauchy-Schwarz inequality, it thus suffices to show that
$$
 \sum_{j: i-T_- \leq j \leq i+T_+}  \frac{|u_j(M_{n-1})^* X|^2}{|\lambda_j(M_{n-1})-\lambda_i(M_n)|} \gg n^{1/2} \log^{-O(1)} n
$$
with overwhelming probability for some $1 \leq T_-, T_+ \ll \log^{O(1)} n$.  It is convenient to work with the normalized matrix $W_n := \frac{1}{\sqrt{n}} M_n$, thus we need to show 
\begin{equation}\label{simo}
 \sum_{j: i-T_- \leq j \leq i+T_+}  \frac{|u_j(W_{n-1})^* Y|^2}{|\lambda_j(W_{n-1})-\lambda_i(W_n)|} \gg \log^{-O(1)} n
\end{equation}
with overwhelming probability for some $1 \leq T_-, T_+ \ll \log^{O(1)} n$, where $Y := \frac{1}{\sqrt{n}} X$ has entries $\frac{1}{\sqrt{n}} \zeta_{j1}$ for $1 < j \leq n$.

There are two cases: the bulk case and the edge case; the former was already treated in \cite{TVbulk}, but the latter is new.

\subsection{The bulk case}

Suppose that $n/2 \leq i < 0.999 n$.  
Then from the semicircular law (or Theorem \ref{sdb-2}) we see that $\lambda_i(W_n) \in [-2+\eps,2+\eps]$ with overwhelming probability for some absolute constant $\eps > 0$.  Let $A$ be a large constant to be chosen later.  A further application of Theorem \ref{sdb-2} then shows that there is an interval $I$ of length $\log^{A} n / n$ centered at $\lambda_i(W_n)$ which contains $\Theta( \log^{A} n )$ eigenvalues of $W_n$.  If $\lambda_j(W_n), \lambda_{j+1}(W_n)$ lie in $I$, then by the Cauchy interlacing property \eqref{interlace}, $|\lambda_j(W_{n-1})-\lambda_i(W_n)| \ll \log^{A} n/ n$.
One can thus lower bound the left-hand side of \eqref{simo} (for suitable values of $T$) by
$$ \gg n \log^{-A} n \sum_{j: \lambda_j(W_n), \lambda_{j+1}(W_n) \in I} |u_j(W_{n-1})^* Y|^2.$$
One can rewrite this as $\log^{-A} n \| \pi_H X \|^2$, where $H$ is the span of the $u_j(W_{n-1})$ for $\lambda_j(W_n), \lambda_{j+1}(W_n) \in I$.  The claim then follows from Lemma \ref{lemma:projection} (for $A$ large enough).

\subsection{The edge case}

We now turn to the more interesting edge case when $0.999 n \leq i \leq n$.  Using the semicircular law, we now see that
\begin{equation}\label{wj}
 \lambda_i(W_n) \geq 1.9
\end{equation}
(say) with overwhelming probability.

Next, we can exploit the following identity:

\begin{lemma}[Interlacing identity]\label{jn-lem}\cite[Lemma 37]{TVbulk}  If $u_j(W_{n-1})^* X$ is non-zero for every $j$, then
\begin{equation}\label{jn}
 \sum_{j=1}^{n-1} \frac{|u_j(W_{n-1})^* X|^2}{\lambda_j(W_{n-1}) - \lambda_i(W_n)} = \frac{1}{\sqrt{n}} \zeta_{nn} - \lambda_i(W_n).
\end{equation}
\end{lemma}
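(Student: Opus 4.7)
The plan is to derive \eqref{jn} as a purely deterministic consequence of the block structure of $W_n$, parallel to Lemma \ref{lemma:firstcoordinate}. By the symmetry of the problem under permutation of row/column indices, it suffices to work with the block decomposition
$$W_n = \begin{pmatrix} W_{n-1} & X \\ X^* & n^{-1/2}\zeta_{nn} \end{pmatrix},$$
in which $W_{n-1}$ is the top-left $(n-1) \times (n-1)$ principal submatrix of $W_n$ and $X \in \C^{n-1}$ is the off-diagonal cross column. Let $\begin{pmatrix} v \\ \alpha \end{pmatrix} \in \C^{n-1} \oplus \C$ be a unit eigenvector of $W_n$ with eigenvalue $\lambda := \lambda_i(W_n)$. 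The eigenvalue equation splits into the two block relations
$$W_{n-1} v + \alpha X = \lambda v, \qquad X^* v + n^{-1/2}\zeta_{nn}\alpha = \lambda\alpha.$$

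The first step I would carry out is a nondegeneracy check under the hypothesis $u_j(W_{n-1})^* X \neq 0$ for every $j$: I need to verify that $\lambda$ is not an eigenvalue of $W_{n-1}$ (so that $(\lambda I - W_{n-1})^{-1}$ is well-defined) and that $\alpha \neq 0$. Were either to fail, the first block equation would force $v$ to lie in the $\lambda$-eigenspace of $W_{n-1}$; inspecting the second block equation (or, equivalently, the solvability condition for $v$ in the first) then forces $u_j(W_{n-1})^* X = 0$ for some $j$ with $\lambda_j(W_{n-1}) = \lambda$, contradicting the hypothesis. A clean alternative is to invoke the Schur-complement determinantal identity
$$\det(\lambda I - W_n) = \det(\lambda I - W_{n-1}) \cdot \bigl(\lambda - n^{-1/2}\zeta_{nn} - X^*(\lambda I - W_{n-1})^{-1} X\bigr),$$
which makes the nondegeneracy transparent: the two factors cannot simultaneously vanish without canceling a pole of the resolvent, which the nonvanishing hypothesis on $u_j(W_{n-1})^* X$ precludes.

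Given nondegeneracy, I would solve the first block equation as $v = \alpha(\lambda I - W_{n-1})^{-1} X$, substitute into the second, and divide by the nonzero scalar $\alpha$ to obtain
$$X^*(\lambda I - W_{n-1})^{-1} X = \lambda - n^{-1/2}\zeta_{nn}.$$
Finally, I would expand the resolvent in the orthonormal eigenbasis of $W_{n-1}$ via $(\lambda I - W_{n-1})^{-1} = \sum_{j=1}^{n-1} (\lambda - \lambda_j(W_{n-1}))^{-1} u_j(W_{n-1}) u_j(W_{n-1})^*$, turning the left-hand side into $\sum_{j=1}^{n-1} |u_j(W_{n-1})^* X|^2/(\lambda - \lambda_j(W_{n-1}))$; negating both sides yields \eqref{jn}. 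The only real obstacle is the short nondegeneracy verification; everything else is a direct linear-algebraic manipulation, and no probabilistic input is required, since the statement is a purely deterministic identity for block Hermitian matrices.
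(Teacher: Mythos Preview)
Your proof is correct and is essentially the same as the paper's: both exploit the block structure of $W_n$ and the resulting secular equation. The paper's presentation diagonalizes $W_{n-1}$ first and writes out the characteristic polynomial of $W_n$ explicitly as
\[
\det(W_n - \lambda I) \;=\; \prod_{j=1}^{n-1} (\lambda_j(W_{n-1}) - \lambda)\Bigl[(n^{-1/2}\zeta_{nn} - \lambda) - \sum_{j=1}^{n-1} \frac{|u_j(W_{n-1})^* X|^2}{\lambda_j(W_{n-1}) - \lambda}\Bigr],
\]
then observes that the nonvanishing hypothesis on $u_j(W_{n-1})^* X$ forces $\lambda_i(W_n)$ to avoid the spectrum of $W_{n-1}$, so the bracket must vanish at $\lambda = \lambda_i(W_n)$; you instead work directly with the eigenvector block equations and the resolvent expansion, which is the same computation in slightly different clothing (and indeed your ``clean alternative'' via the Schur-complement determinant is exactly the paper's route).
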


\begin{proof} By diagonalising $W_{n-1}$ (noting that this does not affect either side of \eqref{jn}), we may assume that $W_{n-1} = \operatorname{diag}(\lambda_1(W_{n-1}),\ldots,\lambda_{n-1}(W_{n-1}))$ and $u_j(W_{n-1}) = e_j$ for $j=1,\ldots,n-1$.  One then easily verifies that the characteristic polynomial $\det(W_n - \lambda I)$ of $W_n$ is equal to
$$
\prod_{j=1}^{n-1} (\lambda_j(W_{n-1}) - \lambda) [ (\frac{1}{\sqrt{n}} \zeta_{nn} - \lambda) - \sum_{j=1}^{n-1} \frac{|u_j(W_{n-1})^* X|^2}{\lambda_j(W_{n-1}) - \lambda} ]
$$
when $\lambda$ is distinct from $\lambda_1(W_{n-1}),\ldots,\lambda_{n-1}(W_{n-1})$.  Since $u_j(W_{n-1})^* X$ is non-zero by hypothesis, we see that this polynomial does not vanish at any of the $\lambda_j(W_{n-1})$.  Substituting $\lambda_i(W_n)$ for $\lambda$, we obtain \eqref{jn}.  
\end{proof}

Again, the continuity of the entries of $M_n$ ensure that the hypothesis of Lemma \ref{jn-lem} is obeyed almost surely.  From \eqref{zak}, \eqref{wj}, \eqref{jn} one has
$$
|\sum_{j=1}^{n-1} \frac{|u_j(W_{n-1})^* X|^2}{\lambda_j(W_{n-1}) - \lambda_i(W_n)}| \geq 1.9 - o(1)$$
with overwhelming probability, so to show \eqref{simo}, it will suffice by the triangle inequality to show that
\begin{equation}\label{jiggle}
 |\sum_{j> i+T_+ \hbox{ or } j < i-T_-} \frac{|u_j(W_{n-1})^* X|^2}{\lambda_j(W_{n-1}) - \lambda_i(W_n)}| \leq 1.8 + o(1)
\end{equation}
(say) with overwhelming probability for some $T = \log^{O(1)} n$.

Let $A > 100$ be a large constant to be chosen later.  By Theorem \ref{sdb-2}, we see (if $A$ is large enough) that
\begin{equation}\label{sor}
N_I = n \alpha_I |I| + O( |I| n \log^{-A/20} n ) 
\end{equation}
with overwhelming probability for any interval $I$ of length $|I|=\log^A n/n$, where $\alpha_I := \frac{1}{|I|} \int_I \rho_{sc}(x)\ dx$.  For any such interval, we see from Lemma \ref{lemma:projection} (and Cauchy interlacing \eqref{interlace}) that with overwhelming probability
$$ \sum_{j: \lambda_j(W_{n-1}) \in I} |u_j(W_{n-1})^* X|^2 = \frac{N_I}{n} + O\left( \frac{\log^{A/2 + O(1)} n}{n} \right)$$
and thus by \eqref{sor} (for $A$ large enough)
$$ \sum_{j: \lambda_j(W_{n-1}) \in I} |u_j(W_{n-1})^* X|^2 = \alpha_I |I| + O( |I| \log^{-A/20)} n  ).$$
Set $d_I := \frac{\dist( \lambda_i(W_n), I )}{|I|}$.  If $d_I \geq \log n$ (say), then 
$$\frac{1}{\lambda_j(W_{n-1}) - \lambda_i(W_n)} = \frac{1}{d_I |I|} + O\left( \frac{1}{d_I^2 |I|} \right)$$
for all $j$ in the above sum, thus
\begin{equation}\label{wii}
 \sum_{j: \lambda_j(W_{n-1}) \in I} \frac{|u_j(W_{n-1})^* X|^2}{\lambda_j(W_{n-1}) - \lambda_i(W_n)} = \frac{\alpha_I}{d_I}
 + O\left( \frac{\log^{-A/20} n}{d_I} \right) + O\left( \frac{\alpha_I}{d_I^2} \right) .
\end{equation}
We now partition the real line into intervals $I$ of length $\log^A n/n$, and sum \eqref{wii} over all $I$ with $d_I \geq \log n$.  Bounding $\alpha_I$ crudely by $O(1)$, we see that $\sum_I O\left( \frac{\alpha_I}{d_I^2} \right)  = O\left( \frac{1}{\log n} \right) = o(1)$.  Similarly, one has
$$ \sum_I O\left( \frac{\log^{-A/20} n}{d_I} \right)  = O( \log^{-A/20} n \log n ) = o(1)$$
if $A$ is large enough.  Finally, Riemann integration of the principal value integral 
$$ p.v. \int_{-2}^2 \frac{\rho_{sc}(x)}{x - \lambda_i(W_n)}\ dx  := \lim_{\eps \to 0} \int_{|x| \leq 2: |x-\lambda_i(W_n)| > \eps} \frac{\rho_{sc}(x)}{x - \lambda_i(W_n)}\ dx  $$
shows that
$$ \sum_I \frac{\alpha_I}{d_I} = p.v. \int_{-2}^2 \frac{\rho_{sc}(x)}{x - \lambda_i(W_n)}\ dx + o(1).$$
The operator norm of $W_n$ is $2+o(1)$ with overwhelming probability (see e.g. \cite{BS}, \cite{baiyin2}), so $|\lambda_i(W_n)| \leq 2+o(1)$.  Using the formula \eqref{explicit} for the Stieltjes transform, one obtains from residue calculus that
$$ p.v. \int_{-2}^2 \frac{\rho_{sc}(x)}{x - \lambda_i(W_n)}\ dx = - \lambda_i(W_n) / 2$$
for $|\lambda_i(W_n)| \leq 2$, with the right-hand side replaced by $- \lambda_i(W_n) / 2 + \sqrt{\lambda_i(W_n)^2-4}/2$ for $|\lambda_i(W_n)| > 2$.
In either event, we have
$$ |p.v. \int_{-2}^2 \frac{\rho_{sc}(x)}{x - \lambda_i(W_n)}\ dx| \leq 1+o(1).$$
Putting all this together, we see that
$$
| \sum_{I: d_I \geq \log n} \sum_{j: \lambda_j(W_{n-1}) \in I} \frac{|u_j(W_{n-1})^* X|^2}{\lambda_j(W_{n-1}) - \lambda_i(W_n)} | \leq 1+o(1).
$$
The intervals $I$ with $d_I < \log n$ will contribute at most $\log^{A+O(1)} n$ eigenvalues, by \eqref{sor} (and Cauchy interlacing \eqref{interlace}).  The claim \eqref{jiggle} now follows by setting $T_-$ and $T_+$ appropriately.  The proof of Proposition \ref{deloc2} is now complete.

\begin{remark} From \eqref{simo} and Lemma \ref{lemma:projection} one sees that
$$ |\lambda_{i-1}(W_{n-1})-\lambda_i(W_n)| \ll \log^{O(1)} n / n$$
with overwhelming probability for all $n/2 \leq i \leq n$, and similarly one has
$$ |\lambda_{i}(W_{n-1})-\lambda_i(W_n)| \ll \log^{O(1)} n / n$$
with overwhelming probability for all $1 \leq i \leq n/2$.  On the other hand, according to the Tracy-Widom law, the gap between $\lambda_n(W_n)$ and $\lambda_{n-1}(W_n)$ (or between $\lambda_1(W_n)$ and $\lambda_2(W_n)$) can be expected to be as large as $n^{-2/3}$.  Thus we see that there is a significant bias at the edge in the interlacing law \eqref{interlace}, which can ultimately be traced to the imbalance of ``forces'' in the interlacing identity \eqref{jn} at that edge.
\end{remark}

\section{Lower bound on eigenvalue gap}\label{ltail-sec}

We now give the proof of Theorem \ref{ltail2}.  Most of the proof will follow closely the proof of Theorem \ref{ltail} in \cite{TVbulk}, so we shall focus on the changes needed to that argument.  As such, this section will assume substantial familiarity with the material from \cite{TVbulk}, and will cite from it repeatedly.  Similarly for the next section.

For technical reasons relating to an induction argument, it turns out that one has to treat the extreme cases $i=1,n$ separately:

\begin{proposition}[Extreme cases]\label{extremes}  Theorem \ref{ltail2} is true when $i=1$ or $i=n$.
\end{proposition}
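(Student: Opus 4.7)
The plan is to reduce the extreme-gap lower bound to a small-ball estimate on a single random projection, via the interlacing identity of Lemma \ref{jn-lem} combined with eigenvector delocalization (Proposition \ref{deloc2}). By the reflection $M_n \mapsto -M_n$, which preserves Condition \textbf{C0} and reverses the order of the eigenvalues, the two extreme gaps $\lambda_2(A_n)-\lambda_1(A_n)$ and $\lambda_n(A_n)-\lambda_{n-1}(A_n)$ are equidistributed, so it suffices to prove $\lambda_n(A_n) - \lambda_{n-1}(A_n) \ge n^{-c_0}$ with high probability, or in $W_n := M_n/\sqrt n$ coordinates $\lambda_n(W_n) - \lambda_{n-1}(W_n) \ge n^{-1-c_0}$. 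I would then invoke Lemma \ref{jn-lem} with $i = n$, splitting off the last row and column of $W_n$, to obtain
$$\sum_{j=1}^{n-1} \frac{|u_j(W_{n-1})^* X|^2}{\lambda_j(W_{n-1}) - \lambda_n(W_n)} \;=\; \tfrac{1}{\sqrt n}\,\zeta_{nn} - \lambda_n(W_n),$$
where $W_{n-1}$ is the resulting minor and $X = (\zeta_{nj}/\sqrt n)_{j<n}$ is the off-diagonal block. Every term on the left is non-positive by Cauchy interlacing, while the right-hand side has magnitude at most $2+o(1)$ with overwhelming probability (from $\lambda_n(W_n) \le 2+o(1)$ and $|\zeta_{nn}| \le K$). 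Isolating the $j=n-1$ term, which is the only one that becomes large when the edge gap shrinks, and combining with the interlacing bound $\lambda_{n-1}(W_n) \le \lambda_{n-1}(W_{n-1})$ then produces
\begin{equation}\label{pl-key}
\lambda_n(W_n) - \lambda_{n-1}(W_n) \;\ge\; c\,|u_{n-1}(W_{n-1})^* X|^2
\end{equation}
for some absolute constant $c > 0$.

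It will then remain to show that $Y := u_{n-1}(W_{n-1})^* X$ satisfies $|Y|^2 \ge n^{-1-c_0}$ with high probability. Since $X$ is independent of $W_{n-1}$ and has i.i.d.\ entries of variance $1/n$, I would condition on $W_{n-1}$ and view $Y$ as a linear combination $\sum_j \overline{u_{n-1,j}}\,\zeta_{nj}/\sqrt n$ with a deterministic unit-norm coefficient vector. Proposition \ref{deloc2} applied to $W_{n-1}$ gives $\|u_{n-1}(W_{n-1})\|_\infty \ll n^{-1/2}\log^{O(1)} n$ with overwhelming probability; on this event the standard Berry--Ess\'een theorem (together with the finite moments granted by Condition \textbf{C0}) would yield a density-at-zero bound for $Y$ comparable to that of a Gaussian of variance $1/n$, hence $\P(|Y| \le t) \ll t\sqrt n + n^{-1/2}\log^{O(1)} n$ for every $t > 0$. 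Setting $t := n^{-(1+c_0)/2}$ would give $\P(|Y|^2 \le n^{-1-c_0}) \ll n^{-c_0/2}\log^{O(1)} n$; combining with \eqref{pl-key} yields the desired gap bound after a harmless adjustment of $c_0$ to absorb the constant $c$.

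The main obstacle is this small-ball step: one needs a density-at-zero estimate for $Y$ that is uniform over the distributions allowed by Condition \textbf{C0}, and this is exactly what delocalization of $u_{n-1}(W_{n-1})$ enables via Berry--Ess\'een, by keeping the error $O(n^{-1/2}\log^{O(1)} n)$ subdominant to the Gaussian contribution $O(t\sqrt n)$ in the regime $t \gtrsim n^{-1/2}\log^{O(1)} n$. No deeper swapping argument appears to be needed at the extreme edge, because the isolated eigenvalue $\lambda_n(W_n)$ has only one dangerous neighbor, which the $j=n-1$ term of Lemma \ref{jn-lem} singles out cleanly; the rest is interlacing bookkeeping and a routine union bound over the negligible failure events.
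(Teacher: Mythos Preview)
Your proposal is correct and follows essentially the same route as the paper's own proof: reduce by symmetry to the top gap, apply the interlacing identity of Lemma \ref{jn-lem} with $i=n$, use the sign-definiteness of the sum together with the bound $|\zeta_{nn}/\sqrt{n} - \lambda_n(W_n)| \le 2+o(1)$ to isolate the $j=n-1$ term, then combine delocalization (Proposition \ref{deloc2}) with a Berry--Ess\'een small-ball estimate to lower-bound $|u_{n-1}(W_{n-1})^* X|$, and finish with Cauchy interlacing. The paper packages the small-ball step by citing Theorem \ref{bes} rather than the classical Berry--Ess\'een theorem directly; your explicit estimate $\P(|Y|\le t)\ll t\sqrt{n}+n^{-1/2}\log^{O(1)} n$ is in fact the cleaner way to state what is needed in the one-dimensional case $N=1$, where the lower-tail bound of Theorem \ref{bes} as written is not informative.
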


\begin{proof} By symmetry it suffices to do this for $i=n$.  By a limiting argument we may assume that the entries $\zeta_{ij}$ of $M_n$ are continuously distributed.  From Lemma \ref{jn-lem} one has (almost surely) that
$$
 \sum_{j=1}^{n-1} \frac{|u_j(W_{n-1})^* X|^2}{\lambda_j(W_{n-1}) - \lambda_n(W_n)} = \frac{1}{\sqrt{n}} \zeta_{nn} - \lambda_n(W_n).
$$
Recall that $\lambda_n(W_n)=2+o(1)$ with overwhelming probability; also, $\frac{1}{\sqrt{n}} \zeta_{nn}=o(1)$ with overwhelming probability.  As all the terms in the left-hand side have the same sign, we conclude that
$$ \frac{|u_{n-1}(W_{n-1})^* X|^2}{|\lambda_{n-1}(W_{n-1}) - \lambda_n(W_n)|} \ll 1.$$
From Theorem \ref{bes} and Proposition \ref{deloc2}, we have $|u_{n-1}(W_{n-1})^* X| \geq n^{-c_0/10}$ (say) with high probability, and so
$$|\lambda_{n-1}(W_{n-1}) - \lambda_n(W_n)| \geq n^{-c_0}$$
with high probability.  The claim now follows from the Cauchy interlacing property \eqref{interlace}.
\end{proof}

\begin{remark}  In fact, at the edge, one should be able to improve the lower bound on the eigenvalue gap substantially, from $n^{-c_0}$ to $n^{1/3-c_0}$, in accordance to the Tracy-Widom law, but we will not need to do so here.
\end{remark}

Now we handle the general case of Theorem \ref{ltail2}.  Fix $M_n$ and $c_0$.  We write $n_0, i_0$ for $i,n$, thus $1 \leq i_0 \leq n_0$ and our task is to show that
$$ \lambda_{i_0+1}(A_n) - \lambda_{i_0}(A_{n_0}) \ge n^{-c_0}$$
with high probability.  By Proposition \ref{extremes} we may assume $1 < i_0 < n_0$.  We may also assume $n_0$ to be large, as the claim is vacuous otherwise.  As in previous sections, we may truncate so that all coefficients $\zeta_{ij}$ are of size $O( \log^{O(1)} n_0 )$ (as before, the exponentially small corrections to the mean and variance of $\zeta_{ij}$ caused by this are easily controlled), and approximate so that the distribution is continuous rather than discrete.

For each $n_0/2 \leq n \leq n_0$, let $A_n$ be the top left $n \times n$ minor of $A_{n_0}$.  As in \cite[Section 3.4]{TVbulk}, we introduce the \emph{regularized gap}
\begin{equation}\label{giln}
g_{i,l,n} := \inf_{1 \leq i_- \leq i-l < i \leq i_+ \leq n} \frac{\lambda_{i_+}(A_n)-\lambda_{i_-}(A_n)}{\min( i_+-i_-, \log^{C_1} n_0 )^{\log^{0.9} n_0}},
\end{equation}
for all $n_0/2 \leq n \leq n_0$ and $1 \leq i-l < i \leq n$, where $C_1$ is a large constant to be chosen later.  It will suffice to show that for each $1 < i_0 < n_0$, that
$$ g_{i_0,1,n_0} \leq n^{-c_0}$$
with high probability.  By symmetry we may assume that $n_0/2 \leq i_0 < n_0$.

As before, we let $u_1(A_n),\ldots,u_n(A_n)$ be an orthonormal eigenbasis of $A_n$ associated to the eigenvectors $\lambda_1(A_n),\ldots,\lambda_n(A_n)$.  We also let $X_n \in \C^n$ be the rightmost column of $A_{n+1}$ with the bottom coordinate $\sqrt{n} \zeta_{n+1,n+1}$ removed.

We will need two key lemmas.  First, we have the following deterministic lemma (a minor variant of \cite[Lemma 47]{TVbulk}), showing that a narrow gap can be propagated backwards in $n$ unless one of a small number of exceptional events happen:

\begin{lemma}[Backwards propagation of gap]\label{backprop}  Suppose that $i_0 \leq n+1 \leq n_0$ and $l \leq n/10$ is such that
\begin{equation}\label{gdel}
g_{i_0,l,n+1} \leq \delta
\end{equation}
for some $0 < \delta \leq 1$ (which can depend on $n$), and that
\begin{equation}\label{pill}
 \lambda_{n+1}(A_{n+1})-\lambda_{n}(A_{n+1}) \geq \delta \exp( \log^{0.91} n_0 ).
\end{equation}
Then $i_0 \leq n$.  Suppose further that
\begin{equation}\label{gilp}
 g_{i_0,l+1,n} \geq 2^m g_{i_0,l,n+1}
\end{equation}
for some $m \geq 0$ with
\begin{equation}\label{mcivil}
2^m \leq \delta^{-1/2}.
\end{equation}
Then one of the following statements hold:
\begin{itemize}
\item[(i)]  (Macroscopic spectral concentration) There exists $1 \leq i_- < i_+ \leq n+1$ with $i_+-i_- \geq \log^{C_1/2} n$ such that $|\lambda_{i_+}(A_{n+1}) - \lambda_{i_-}(A_{n+1})| \leq \delta \exp( \log^{0.95} n ) (i_+-i_-)$.
\item[(ii)]  (Small inner products)  There exists $1 \leq i_- \leq i_0-l < i_0 \leq i_+ \leq n$ with $i_+-i_- \leq \log^{C_1/2} n$ such that
\begin{equation}\label{smallin}
\sum_{i_- \leq j < i_+} |u_j(A_n)^* X_n|^2 \leq \frac{n (i_+-i_-)}{2^{m/2} \log^{0.01} n}
\end{equation}
\item[(iii)]  (Large coefficient)  We have
$$ |\zeta_{n+1,n+1}| \geq n^{0.4}.$$
\item[(iv)] (Large eigenvalue)  For some $1 \leq i \leq n+1$ one has
$$ |\lambda_i(A_{n+1})| \geq \frac{n \exp( -\log^{0.95} n )}{\delta^{1/2}}.$$
\item[(v)] (Large inner product)  There exists $1 \leq i \leq n$ such that
$$ |u_i(A_n)^* X_n|^2 \geq \frac{n \exp( - \log^{0.96} n )}{\delta^{1/2}}.$$
\item[(vi)] (Large row)  We have
$$ \|X_n\|^2 \geq \frac{n^2 \exp( - \log^{0.96} n )}{\delta^{1/2}}.$$
\item[(vii)] (Large inner product near $i_0$)  There exists $1 \leq i \leq n$ with $|i-i_0| \leq \log^{C_1} n$ such that
$$ |u_i(A_n)^* X_n|^2 \geq 2^{m/2} n \log^{0.8} n.$$
\end{itemize}
\end{lemma}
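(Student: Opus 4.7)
The plan is to adapt the argument of \cite[Lemma 47]{TVbulk} to the edge setting. The central analytical tool is the interlacing identity \eqref{jn} of Lemma \ref{jn-lem}, which produces for each eigenvalue $\lambda_k(A_{n+1})$ a rigid relation of the form
$$
\sqrt{n}\,\zeta_{n+1,n+1} - \lambda_k(A_{n+1}) \;=\; \sum_{j=1}^{n} \frac{|u_j(A_n)^* X_n|^2}{\lambda_j(A_n) - \lambda_k(A_{n+1})}
$$
(up to the standard normalization). Applying this at two indices $k=i_-$ and $k=i_+$ and dividing by the gap $\lambda_{i_+}(A_{n+1}) - \lambda_{i_-}(A_{n+1})$ yields the master identity
$$
1 \;=\; \sum_{j=1}^{n} \frac{|u_j(A_n)^* X_n|^2}{\bigl(\lambda_j(A_n) - \lambda_{i_-}(A_{n+1})\bigr)\bigl(\lambda_j(A_n) - \lambda_{i_+}(A_{n+1})\bigr)},
$$
which is the workhorse of the entire argument.

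First I would dispose of the boundary case $i_0 = n+1$: under \eqref{pill}, taking $i_- = n,\ i_+ = n+1$ in the infimum defining $g_{i_0,l,n+1}$ would produce a value exceeding $\delta\exp(\log^{0.91} n_0)$, contradicting \eqref{gdel}; hence $i_0 \leq n$. Next I would extract near-optimal witnesses $i_- \leq i_0 - l < i_0 \leq i_+$ for the infimum in $g_{i_0,l,n+1}$, so that $\Delta := \lambda_{i_+}(A_{n+1}) - \lambda_{i_-}(A_{n+1})$ is small. If $i_+ - i_- \geq \log^{C_1/2} n$ then alternative (i) holds directly, so we may assume $i_+ - i_- \leq \log^{C_1/2} n$; by Cauchy interlacing \eqref{interlace}, the eigenvalues $\lambda_{i_-}(A_n),\ldots,\lambda_{i_+-1}(A_n)$ of $A_n$ all lie in the narrow interval $I := [\lambda_{i_-}(A_{n+1}), \lambda_{i_+}(A_{n+1})]$.

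The main step is to estimate the right-hand side of the master identity assuming none of (ii)--(vii) holds, and derive a contradiction from the amplification hypothesis \eqref{gilp}. One splits the $j$-sum into three regimes: the inside window $j \in [i_-, i_+-1]$, where the product in the denominator is negative with magnitude controlled by the $A_n$-spacings in $I$; a mesoscopic window $|j-i_0| \leq \log^{C_1} n$ outside that, controlled via Proposition \ref{deloc2} and alternative (vii); and the far tails, controlled using Theorem \ref{sdb-2} together with alternatives (iv)--(vi). The assumption $g_{i_0,l+1,n} \geq 2^m g_{i_0,l,n+1}$ forces the $\lambda_j(A_n)$ with $j\in[i_-,i_+-1]$ to cluster near the endpoints of $I$, which enlarges the reciprocal denominators in the master identity; unless alternative (ii) supplies enough smallness of $\sum|u_j(A_n)^* X_n|^2$ in that window, the inside sum overwhelms the outside contribution and the identity fails.

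The hard part will be the bookkeeping in the tail regime at the edge. Since $\rho_{sc}$ is no longer bounded away from zero near $\pm 2$, the outside sum must be evaluated via the edge concentration estimate Theorem \ref{sdb-2} and the edge delocalization bound Proposition \ref{deloc2} rather than their bulk counterparts used in \cite{TVbulk}. The key quantitative check is that the logarithmic losses incurred by working up to the edge (analogous to those appearing in Section \ref{deloc-sec}) are absorbed by the $2^{m/2}$ gain permitted by \eqref{mcivil}; once this is verified, the remainder of the case analysis proceeds exactly as in \cite[Lemma 47]{TVbulk}, each of the seven exceptional alternatives being precisely the obstruction to closing one of the terms in the three-regime decomposition above.
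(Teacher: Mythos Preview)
Your outline correctly identifies the interlacing identity and the three-regime decomposition as the skeleton of the argument, and your master identity (up to a sign) is indeed the central equation. But there is a genuine conceptual error in the last paragraph: you propose to control the tail and mesoscopic regimes using Theorem~\ref{sdb-2} and Proposition~\ref{deloc2}. This lemma is \emph{purely deterministic}---there is no randomness anywhere in its statement, and no probabilistic tool can appear in its proof. The seven alternatives (i)--(vii) are designed so that, assuming each of them fails, one already has in hand every pointwise bound needed to close the identity: failure of (v) and (vi) bounds the far tails, failure of (vii) bounds the mesoscopic window, failure of (i) gives the spectral spacing needed for the dyadic summation, and so on. The ESD concentration and delocalization results are used in the \emph{next} step, Proposition~\ref{bad-event}, to show that (i)--(vii) are unlikely; they play no role here. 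Consequently, there is no ``hard part'' arising from the vanishing of $\rho_{sc}$ at the edge: the paper's proof is verbatim that of \cite[Lemma~47]{TVbulk}, with only the index range $1 \leq j \leq n$ replacing the bulk range, the dyadic expansion truncated at the boundary via $i_{--} := \max(i_- - 2^{k-1},1)$, and the one-line argument for $i_0 \leq n$.

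A smaller point: your boundary argument for $i_0 \leq n$ is phrased backwards. You cannot ``take $i_- = n,\ i_+ = n+1$'' in an infimum and derive a contradiction from that single term being large. The correct argument (as in the paper) is that any near-optimal witness $i_-,i_+$ for \eqref{gdel} satisfies $\lambda_{i_+}(A_{n+1}) - \lambda_{i_-}(A_{n+1}) \leq \delta(\log^{C_1} n_0)^{\log^{0.9} n_0} < \delta\exp(\log^{0.91} n_0)$, while \eqref{pill} forces $\lambda_{i_+}(A_{n+1}) - \lambda_{i_-}(A_{n+1}) \geq \delta\exp(\log^{0.91} n_0)$ whenever $i_+ = n+1$; hence $i_+ \leq n$ and in particular $i_0 \leq n$.
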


\begin{proof}
The proof of this proposition repeats the proof of \cite[Lemma 47]{TVbulk} in \cite[Section 6]{TVbulk} almost exactly.  Only the following changes have to be made:

\begin{itemize}
\item We have the upper bound $\lambda_{i_+}(A_{n+1}) - \lambda_{i_-}(A_{n+1}) \leq \delta (\log^{C_1} n)^{\log^{0.9} n_0}$, which together with \eqref{pill} forces $i_+ \neq n+1$ and thus $i_0 \leq n$ as required.
\item The variable $j$ now lies in the range $1 \leq j \leq n$ rather than $\eps n/10 \leq j \leq (1-\eps/10) n$.
\item $i_{--}$ has to be defined as $\max(i_--2^{k-1},1)$ rather than just $i_- - 2^{k-1}$ (and similarly for $i_{++}$).
\end{itemize}
\end{proof}

Next, we need the following result that asserts that the events (i)-(vii) are rare:

\begin{proposition}[Bad events are rare]\label{bad-event}  Suppose that $n_0/2 \leq n < n_0$ and $l \leq n/10$, and set $\delta := n_0^{-\kappa}$ for some sufficiently small fixed $\kappa > 0$.  Let $m \geq 0$ be such that $2^m \leq \delta^{-1/2}$. Then:
\begin{itemize}
\item[(a)] The events (i), (iii), (iv), (v), (vi) in Lemma \ref{backprop} all fail with high probability.  
\item[(b)] There is a constant $C'$ such that all the coefficients of the eigenvectors $u_j(A_n)$ for $1 \leq j \leq n$ are of magnitude at most $n^{-1/2} \log^{C'} n$ with overwhelming probability.  Conditioning $A_n$ to be a matrix with this property, the events (ii) and (vii) occur with a conditional probability of at most $2^{-\kappa m} + n^{-\kappa}$.  
\item[(c)] Furthermore, there is a constant $C_2$ (depending on $C',\kappa,C_1$) such that if $l \geq C_2$ and $A_n$ is conditioned as in (b), then (ii) and (vii) in fact occur with a conditional probability of at most $2^{-\kappa m} \log^{-2C_1} n + n^{-\kappa}$.
\end{itemize}
\end{proposition}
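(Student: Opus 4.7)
The plan is to dispose of the seven events using the concentration tools of Sections 2--3. The key structural remark is that $X_n$ and $A_n$ are independent (they correspond to disjoint entries of $A_{n+1}$), so we may condition on $A_n$ freely and treat $X_n$ as fresh randomness against deterministic eigenvectors and eigenvalues. With this observation, events (iii)--(vi) of part (a) reduce to tail bounds for $X_n$ and standard spectral-edge estimates, event (i) is a routine consequence of Theorem \ref{sdb-2}, and events (ii) and (vii) of part (b) yield to Theorem \ref{bes} applied after the delocalization input from Proposition \ref{deloc2}.

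For part (a) I would handle the events individually. Event (iii) is immediate from \textbf{C0}. For (vi), the Projection Lemma with $H=\C^n$ yields $\|X_n\|^2 = n^2(1+o(1))$ with overwhelming probability, well below the threshold $n^2 \delta^{-1/2}\exp(-\log^{0.96}n) \gg n^{2+\kappa/3}$ (since $\delta^{-1/2}=n^{\kappa/2}$ beats every sub-polynomial decay). Event (iv) fails because $\|A_{n+1}\| = (2+o(1))n$ by the standard semicircular bound on the operator norm. Event (v) fails by Lemma \ref{lemma:projection} applied to the one-dimensional span of each $u_i(A_n)$ (conditional on $A_n$), giving $|u_i(A_n)^*X_n|^2 \ll n\log^{O(1)}n$ uniformly in $i$ after a union bound. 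The only delicate event is (i): the hypothesis would force $N\geq\log^{C_1/2} n$ eigenvalues of $W_{n+1}$ into an interval $I$ of length $\delta\exp(\log^{0.95} n)\,N/n$. Since $\delta\exp(\log^{0.95} n)=o(1)$, this contradicts Theorem \ref{sdb-2} directly when $|I|$ exceeds the $K^2\log^2 n/n$ threshold; otherwise enlarging $I$ to this threshold and applying Lemma \ref{lemma:smallsc} yields at most $\log^{O(1)}n$ eigenvalues inside, contradicting $N\geq\log^{C_1/2} n$ once $C_1$ is large.

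For part (b) I would first use Proposition \ref{deloc2} to assert $|u_j(A_n)^*e_k|\ll n^{-1/2}\log^{C'} n$ with overwhelming probability, and condition $A_n$ on this delocalization event. Because $X_n$ is independent of $A_n$, the rescaled vector $X_n/\sqrt{n+1}$ has unit-variance independent entries bounded by $K$, and the incompressibility parameter for rows $u_j(A_n)^*$ in \eqref{summer} satisfies $\sigma\ll n^{-1/2}\log^{C'} n$. For (vii), Theorem \ref{bes} (upper tail) yields $\exp(-c\,2^{m/2}\log^{0.8} n) + O(n^{-1/2}\log^{C'} n)$; since $\log^{0.8} n\gg\log\log n$, the exponential factor is $\leq 2^{-\kappa m}$ for any fixed $\kappa$, and the union bound over $|i-i_0|\leq\log^{C_1} n$ is absorbed into $n^{-\kappa}$. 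For (ii), set $N=i_+-i_-$ and $t=\sqrt{N/(2^{m/2}\log^{0.01} n)}$; Theorem \ref{bes} (lower tail) produces $(t/\sqrt{N})^{\lfloor N/4\rfloor}+O(N^4 t^{-3}\sigma)$. Unfolding, the second summand is $\leq n^{-1/2+3\kappa/8}\log^{O(1)}n\leq n^{-\kappa}$ for $\kappa$ small (using $2^m\leq n^{\kappa/2}$), and for $N\geq 4$ the first summand is $\leq 2^{-m/4}$; the subcase $N\in\{2,3\}$ (possible only when $l\leq 2$) is covered by the $n^{-\kappa}$ contribution alone, since $n^{-\kappa}\leq 2^{-\kappa m}+n^{-\kappa}$ trivially. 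A union bound over the $O(\log^{C_1} n)$ admissible pairs closes the case.

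For part (c) the extra hypothesis $l\geq C_2$ forces $N\geq C_2+1$, so the first summand of Theorem \ref{bes} strengthens to $(2^{-m/4}\log^{-0.005} n)^{\lfloor C_2/4\rfloor}$. Choosing $C_2$ large enough that $\lfloor C_2/4\rfloor/4\geq \kappa$ and $0.005\lfloor C_2/4\rfloor\geq 3C_1$ (the extra $C_1$ offsetting the polylog union-bound loss) delivers the desired $2^{-\kappa m}\log^{-2C_1} n$ factor for event (ii); event (vii) needs no improvement because $\exp(-c\,2^{m/2}\log^{0.8} n)$ already dominates $2^{-\kappa m}\log^{-2C_1} n$ for large $n$. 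The main obstacle I foresee lies in the bookkeeping for event (i): one must verify that the interval widths permitted by Theorem \ref{sdb-2} and Lemma \ref{lemma:smallsc} are strictly smaller than those produced by the hypothesized spectral concentration in the correct regime of $\delta$ and $K$. Once that interlocking is set up cleanly, the remaining events reduce to straightforward applications of the Projection Lemma and Theorem \ref{bes}.
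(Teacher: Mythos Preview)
Your approach is essentially the same as the paper's, which simply defers to \cite[Section 7]{TVbulk} and notes that the only required changes are to replace Theorem \ref{sdb} by Theorem \ref{sdb-2} and to let $j$ range over $1 \leq j \leq n$ rather than the bulk; you have effectively reconstructed that argument, invoking Proposition \ref{deloc2} and Theorem \ref{sdb-2} at exactly the right places.

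One genuine slip to fix: in your treatment of event (ii) for small $N$, the sentence ``the subcase $N\in\{2,3\}$ \ldots is covered by the $n^{-\kappa}$ contribution alone, since $n^{-\kappa}\leq 2^{-\kappa m}+n^{-\kappa}$ trivially'' is circular. When $N\leq 3$ the lower-tail bound of Theorem \ref{bes} has $\lfloor N/4\rfloor=0$, so the first summand is $O(1)$ and you have proved nothing. You need a separate anti-concentration input for a single linear form $S_j=\sum_k a_{jk}\zeta_k$ with $\sigma\ll n^{-1/2}\log^{C'}n$ (a one-dimensional Berry--Ess\'een bound giving $\P(|S_j|\leq t)\ll t+\sigma$ suffices, and is in fact what \cite{TVbulk} uses). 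With that in hand the small-$N$ case yields $\ll 2^{-m/4}\log^{-0.005}n + n^{-1/2+o(1)}$, which is acceptable. Also note $N\geq l$ (not $l+1$), so the exceptional range is $N\in\{1,2,3\}$ and $l\leq 3$, not $l\leq 2$; this does not affect part (c) since there $l\geq C_2$ is large.
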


\begin{proof}
The proof of this proposition repeats the proof of \cite[Proposition 49]{TVbulk} in \cite[Section 7]{TVbulk} almost exactly.  Only the following changes have to be made:

\begin{itemize}
\item All references to \cite[Theorem 56]{TVbulk} (i.e. Theorem \ref{sdb}) need to be replaced with Theorem \ref{sdb-2}.
\item The variable $j$ now lies in the range $1 \leq j \leq n$ rather than $\eps n/2 \leq j \leq (1-\eps/2) n$.
\end{itemize}
\end{proof}

Given Lemma \ref{backprop} and Proposition \ref{bad-event}, the proof of Theorem \ref{ltail2} exactly follows the proof of Theorem \ref{ltail} in \cite[Section 3.5]{TVbulk}, with the following minor changes:
\begin{itemize}
\item In the definition of the event $E_n$, the range $\eps n/2 \leq j \leq (1-\eps/2) n$ needs to be expanded to $1 \leq j \leq n$.
\item In the definition of the event $E_0$, the events that \eqref{pill} fail for some $n_0 - \log^{2C_1} n_0 \leq n \leq n_0$ have to be included; but these events occur with polynomially small probability, thanks to Proposition \ref{extremes} and the union bound.
\end{itemize}

This concludes the proof of Theorem \ref{ltail2}.

\section{The four moment theorem}\label{fmt-sec}

We now prove Theorem \ref{theorem:main2}.  As in \cite[Section 3.3]{TVbulk}, the proof is based on two key propositions.
 The first proposition asserts that one can swap a single coefficient (or more precisely, two coefficients) of a (deterministic) matrix $A$ as long as $A$ obeys a certain ``good configuration condition'':

\begin{proposition}[Replacement given a good configuration]\label{swap}  There exists a positive constant $C_1$ such that the following holds.  Let $k \geq 1$ and $\eps_1 > 0$, and assume $n$ sufficiently large depending on these parameters.  Let $1 \leq i_1 < \ldots < i_k \leq n$.
For a complex parameter $z$, let $A(z)$ be a (deterministic) family of $n \times n$ Hermitian matrices of the form
$$ A(z) = A(0) + z e_p e_q^* + \overline{z} e_q e_p^*$$
where $e_p, e_q$ are unit vectors.  We assume that for every $1 \leq j \leq k$ and every $|z| \leq n^{1/2+\eps_1}$ whose real and imaginary parts are multiples of $n^{-C_1}$, we have
\begin{itemize}
\item (Eigenvalue separation)  For any $1 \leq i \leq n$ with $|i-i_j| \geq n^{\eps_1}$, we have
\begin{equation}\label{noon}
 |\lambda_i(A(z)) - \lambda_{i_j}(A(z))| \geq n^{-\eps_1} |i-i_j|.
\end{equation}
\item (Delocalization at $i_j$)  If $P_{i_j}(A(z))$ is the orthogonal projection to the eigenspace associated to $\lambda_{i_j}(A(z))$, then
\begin{equation}\label{pz1}
 \| P_{i_j}(A(z)) e_p \|, \| P_{i_j}(A(z)) e_q \| \leq n^{-1/2+\eps_1}.
\end{equation}
\item For every $\alpha \geq 0$
\begin{equation}\label{pz2}
\| P_{i_j,\alpha}(A(z)) e_p \|, \| P_{i_j,\alpha}(A(z)) e_q \| \leq 2^{\alpha/2} n^{-1/2+\eps_1},
\end{equation}
whenever $P_{i_j,\alpha}$ is the orthogonal projection to the eigenspaces corresponding to eigenvalues $\lambda_i(A(z))$ with $2^\alpha \leq |i-i_j| < 2^{\alpha+1}$.
\end{itemize}
We say that $A(0), e_p, e_q$ are a \emph{good configuration} for $i_1,\ldots,i_k$ if the above properties hold.  Assuming this good configuration, then we have
\begin{equation}\label{barg}
\E (F(\zeta)) = \E F(\zeta') + O( n^{-(r+1)/2 + O(\eps_1)} ), 
\end{equation}
whenever
$$
F(z) := G( \lambda_{i_1}(A(z)),\ldots,\lambda_{i_k}(A(z)),Q_{i_1}(A(z)),\ldots,Q_{i_k}(A(z))),$$
and
$$ G = G( \lambda_{i_1},\ldots,\lambda_{i_k}, Q_{i_1},\ldots,Q_{i_k} )$$
is a smooth function from $\R^k \times \R_+^k \to \R$ that is supported on the region
$$ Q_{i_1},\ldots,Q_{i_k} \leq n^{\eps_1}$$
and obeys the derivative bounds
$$ |\nabla^j G| \leq n^{\eps_1}$$
for all $0 \leq j \leq 5$, and $\zeta, \zeta'$ are random variables with $|\zeta|, |\zeta'| \leq n^{1/2+\eps_1}$ almost surely, which 
match to order $r$ for some $r=2,3,4$.  

If $G$ obeys the improved derivative bounds
$$ |\nabla^j G| \leq n^{-C j \eps_1}$$
for $0 \leq j \leq 5$ and some sufficiently large absolute constant $C$, then we can strengthen $n^{-(r+1)/2 + O(\eps_1)}$ in \eqref{barg} to $ n^{-(r+1)/2 - \eps_1}$.
\end{proposition}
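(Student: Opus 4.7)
The proof proceeds by Taylor expansion and moment matching, reducing the claim to derivative estimates on $F$ that follow from the good configuration hypothesis via analytic perturbation theory. Write $\zeta = x + iy$ and view $F(\zeta) = F(x,y)$ as a smooth function of two real variables, so that $A(\zeta) = A(0) + (x+iy) e_p e_q^* + (x-iy) e_q e_p^*$. Expand $F$ to order $r$ around $(0,0)$:
\[
F(\zeta) = \sum_{a+b \le r} \frac{x^a y^b}{a!\, b!} \, \partial_x^a \partial_y^b F(0) + R_r(\zeta),
\]
where $|R_r(\zeta)| \le C_r |\zeta|^{r+1} \sup_{|w| \le |\zeta|} |\partial^{r+1} F(w)|$. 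Because $\zeta, \zeta'$ match to order $r$ and the polynomial part depends only on monomials of $\zeta, \bar\zeta$ of total degree $\le r$, the polynomial contributions to $\E F(\zeta)$ and $\E F(\zeta')$ cancel exactly, leaving
\[
\E F(\zeta) - \E F(\zeta') = \E R_r(\zeta) - \E R_r(\zeta').
\]
Using $|\zeta|, |\zeta'| \le n^{1/2+\eps_1}$ a.s., it remains to establish the pointwise derivative bound $|\partial^{r+1} F(w)| \le n^{-(r+1) + O(\eps_1)}$ for all $|w| \le n^{1/2+\eps_1}$ on the support of $G$, which will yield $\E|R_r(\zeta)| \le n^{-(r+1)/2 + O(\eps_1)}$ as desired.

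The bulk of the work is therefore bounding $|\partial^k F(w)|$ for $1 \le k \le r+1 \le 5$. By the chain rule this reduces to bounding the derivatives of the scalar quantities $\lambda_{i_j}(A(w))$ and $Q_{i_j}(A(w))$ with respect to $w$, since $|\nabla^j G| \le n^{\eps_1}$ by hypothesis. I would use standard analytic perturbation theory: the first derivative of $\lambda_{i_j}$ is $u_{i_j}^* \dot A u_{i_j}$ with $\dot A = e_p e_q^* + e_q e_p^*$ (or its imaginary analogue), and \eqref{pz1} immediately gives the bound $O(n^{-1 + 2\eps_1})$. For higher derivatives, one gets resolvent sums such as
\[
\sum_{i \ne i_j} \frac{(u_i^* \dot A u_{i_j})(u_{i_j}^* \dot A u_i)}{\lambda_{i_j}(A(w)) - \lambda_i(A(w))}
\]
and iterated versions thereof. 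I would estimate each such sum by a dyadic decomposition $|i - i_j| \in [2^\alpha, 2^{\alpha+1})$: the denominator is at least $n^{-\eps_1} 2^\alpha$ by the separation bound \eqref{noon}, while the sum of the numerator contributions over a dyadic block is controlled by the group projection bound \eqref{pz2}, giving $\|P_{i_j, \alpha}(A(w)) e_{p/q}\|^2 \le 2^\alpha n^{-1+2\eps_1}$. The $2^\alpha$ factors cancel, and summing over the $O(\log n)$ values of $\alpha$ contributes only a $\log n$ loss, absorbed into $n^{O(\eps_1)}$. Each successive differentiation either adds a denominator (handled by the same dyadic scheme) or differentiates one of the numerator factors (handled inductively), and the net effect is one extra factor of $n^{-1 + O(\eps_1)}$ per derivative. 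Thus $|\partial^k \lambda_{i_j}(A(w))| = O(n^{-k + O(\eps_1)})$ for $k \le r+1$, and analogously for $Q_{i_j}(A(w))$, where the support condition $Q_{i_j} \le n^{\eps_1}$ ensures the zeroth-order factor stays bounded.

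Combining via the chain rule yields $|\partial^k F(w)| \le n^{-k + O(\eps_1)}$ (with implied constants depending on $k \le 5$ and the sup-norm of $G$'s derivatives, which is $n^{\eps_1}$). Substituting $k = r+1$ gives the required remainder estimate and \eqref{barg}. For the strengthened conclusion under the stricter hypothesis $|\nabla^j G| \le n^{-C j \eps_1}$, the chain rule contributes a factor $n^{-C k \eps_1}$ to $\partial^k F$; choosing $C$ larger than the implicit constant in the $O(\eps_1)$ losses from the dyadic resolvent sums converts the bound into $|\partial^{r+1} F(w)| \le n^{-(r+1) - \eps_1}$, which propagates to $n^{-(r+1)/2 - \eps_1}$ in \eqref{barg}.

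The main obstacle I anticipate is the bookkeeping in the higher-derivative estimates: the $r+1 \le 5$ derivatives of eigenvalues and of spectral projections generate increasingly complicated nested resolvent expressions, and one must verify that every term satisfies the dyadic estimate uniformly in $|w| \le n^{1/2 + \eps_1}$, using \emph{only} the good configuration hypotheses which by assumption hold at every such $w$ on a $n^{-C_1}$ grid (continuity across the grid handles the remaining $w$ since all relevant functions are Lipschitz at scale $n^{-C_1}$ with polynomial constants). Once this bookkeeping is organized inductively in $k$, the rest of the argument is routine.
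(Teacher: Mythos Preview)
Your approach is correct and essentially coincides with the argument in the cited reference: the paper itself does not reprove this proposition but simply cites \cite[Proposition 43]{TVbulk}, whose proof proceeds exactly by Taylor-expanding $F$ to order $r$, using moment matching to cancel the polynomial terms, and bounding the remainder via derivative estimates on $\lambda_{i_j}(A(z))$ and $Q_{i_j}(A(z))$ obtained from resolvent perturbation theory together with the dyadic decomposition that the good-configuration hypotheses \eqref{noon}--\eqref{pz2} are designed to support. Your anticipated obstacle (the bookkeeping of nested resolvent sums and the passage from the discrete grid in $z$ to all $|z|\le n^{1/2+\eps_1}$ via Lipschitz continuity) is indeed where the work lies in \cite{TVbulk}, and is handled there in the manner you describe.
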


\begin{proof} See \cite[Proposition 43]{TVbulk}.
\end{proof}

The second proposition asserts that these good configurations occur very frequently:

\begin{proposition}[Good configurations occur very frequently]\label{lemma:GCC}
Let $\eps_1 > 0$ and $C, C_1, k \geq 1$.  Let $1 \leq i_1 < \ldots < i_k \leq n$, let $1 \leq p,q \leq n$, let $e_1,\ldots,e_n$ be the standard basis of $\C^n$, and let $A(0) = (\zeta_{ij})_{1 \leq i,j \leq n}$ be a random Hermitian matrix with independent upper-triangular entries and $|\zeta_{ij}| \leq n^{1/2} \log^C n$ for all $1 \leq i,j \leq n$, with $\zeta_{pq}=\zeta_{qp}=0$, but with $\zeta_{ij}$ having mean zero and variance $1$ for all other $ij$, except on the diagonal where the variance is instead $c$ for some absolute constant $c>0$, and also being distributed continuously in the complex plane.  Then $A(0),e_p,e_q$ obey the Good Configuration Condition in Theorem \ref{swap} for $i_1,\ldots,i_k$ and with the indicated value of $\eps_1, C_1$ with overwhelming probability.
\end{proposition}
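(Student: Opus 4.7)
The plan is to reduce the proposition to the edge-versions of rigidity (Theorem \ref{sdb-2}), delocalization (Proposition \ref{deloc2}), and the lower gap bound (Theorem \ref{ltail2}), via a polynomial union bound over the discretized $z$-lattice together with rank-two interlacing to transfer eigenvalue estimates between $A(0)$ and $A(z)$. The set of $z$ with $\Re z, \Im z \in n^{-C_1}\Z$ and $|z| \leq n^{1/2+\eps_1}$ has cardinality $O(n^{O(C_1)})$, so I union bound over this lattice and fix an individual $z$. The perturbation $A(z) - A(0) = z e_p e_q^* + \bar z e_q e_p^*$ is Hermitian of rank two with eigenvalues $\pm|z|$, so the standard interlacing inequality yields
\[
\lambda_{i-1}(A(0)) \leq \lambda_i(A(z)) \leq \lambda_{i+1}(A(0)),
\]
letting me transfer estimates from $A(0)$ to $A(z)$ at the cost of a $\pm 1$ index shift.

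For the eigenvalue separation \eqref{noon}, I would split into two regimes according to $k := |i - i_j|$. At the large scales $k \geq n^{1-\eps_1/2}$, applying Theorem \ref{sdb-2} to $A(0)/\sqrt n$ yields the rigidity estimate $|\lambda_i(A(0)) - n\gamma_{i/n}| \ll n^{1-\eps_1/4}$, where $\gamma_t$ denotes the $t$-quantile of $\rho_{sc}$; since the quantile separation $n\cdot|\gamma_{i/n}-\gamma_{i_j/n}|$ is at least $c k$ in the bulk and at least $c k^{2/3} n^{1/3}$ near the edge, both of which comfortably dominate the rigidity error at this scale, the required bound follows. At the small scales $n^{\eps_1} \leq k < n^{1-\eps_1/2}$, I would telescope the gap bound of Theorem \ref{ltail2} (with parameter $c_0 = \eps_1/2$) over the $k$ consecutive gaps, obtaining $|\lambda_i(A(0))-\lambda_{i_j}(A(0))| \geq k\cdot n^{-\eps_1/2}$, which is stronger than the required $n^{-\eps_1} k$. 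Transferring via the rank-two interlacing above then yields \eqref{noon} for $A(z)$.

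For the delocalization bounds \eqref{pz1} and \eqref{pz2}, I would apply Proposition \ref{deloc2} to $A(z)/\sqrt n$: with overwhelming probability every coordinate of every unit eigenvector of $A(z)$ is bounded by $n^{-1/2}\log^{O(1)}n \leq n^{-1/2+\eps_1/2}$. Since the entries are continuously distributed, $\lambda_{i_j}(A(z))$ is simple almost surely and $P_{i_j}(A(z)) = u_{i_j}(A(z)) u_{i_j}(A(z))^*$, so \eqref{pz1} is immediate. Summing squared coordinates across the dyadic index block $\{i : 2^\alpha \leq |i-i_j| < 2^{\alpha+1}\}$ yields
\[
\|P_{i_j,\alpha}(A(z))e_p\|^2 \leq 2^{\alpha+1}\cdot n^{-1+\eps_1} \leq 2^\alpha \cdot n^{-1+2\eps_1},
\]
giving \eqref{pz2}, and the same bound holds for $e_q$.

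The main obstacle is that Theorem \ref{sdb-2}, Proposition \ref{deloc2}, and Theorem \ref{ltail2} are stated for matrices satisfying Condition {\bf C0}, whereas $A(z)/\sqrt n$ has deterministic entries at $(p,q)$ and $(q,p)$ of magnitude up to $n^{\eps_1}$, well above the $\log^{O(1)} n$ truncation used in Sections \ref{esd-asym}--\ref{ltail-sec}. However, inspection shows that those proofs use {\bf C0} only through (i) the Stieltjes transform analysis, where replacing a single entry by a deterministic value perturbs the Schur complement \eqref{ssz} of Lemma \ref{cool} by only an $O(1/n)$ term, and (ii) the purely algebraic interlacing identity \eqref{jn} of Lemma \ref{jn-lem}. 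Rerunning the arguments of Sections \ref{esd-asym}--\ref{ltail-sec} for $A(z)/\sqrt n$ is bookkeeping-heavy but introduces no genuinely new difficulty, and the management of the union bound (ensuring the polynomial decay in Theorem \ref{ltail2} still beats $n^{O(C_1)}$) can be arranged by choosing $c_0$ sufficiently small compared to the $C_1$ used in the discretization.
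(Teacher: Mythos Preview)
Your overall architecture—polynomial union bound over the discretized $z$-lattice, rank-two interlacing to transfer eigenvalue information from $A(0)$ to $A(z)$, and Proposition~\ref{deloc2} for the eigenvector bounds \eqref{pz1}, \eqref{pz2}—is in line with the paper, which simply reruns \cite[Section~5]{TVbulk} with Theorem~\ref{sdb} and Proposition~\ref{deloc} replaced by their edge versions Theorem~\ref{sdb-2} and Proposition~\ref{deloc2}.  Your handling of the ``$A(z)$ is not quite {\bf C0}'' issue is also essentially what the paper does implicitly.

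The genuine gap is in your treatment of the small-scale eigenvalue separation via Theorem~\ref{ltail2}.  That theorem only guarantees $\lambda_{i+1}(A_n)-\lambda_i(A_n)\ge n^{-c_0}$ with \emph{high} probability, i.e.\ failure probability $O(n^{-c_1})$ for a fixed $c_1=c_1(c_0)>0$.  Telescoping over up to $n$ consecutive gaps and union-bounding over the indices $i$ leaves a total failure probability of order $n^{1-c_1}$, which is polynomially small but not overwhelming; Proposition~\ref{lemma:GCC} requires overwhelming probability.  Your proposed fix (``choose $c_0$ small compared to $C_1$'') goes the wrong way: the exponent $c_1$ in Theorem~\ref{ltail2} degrades rather than improves as $c_0\to 0$ (heuristically $c_1\asymp c_0$ from level repulsion), and in any event no tuning of $c_0$ can upgrade a high-probability statement to an overwhelming one.

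The paper's route avoids Theorem~\ref{ltail2} here entirely.  The separation \eqref{noon} for $|i-i_j|\ge n^{\eps_1}$ follows already from the ESD upper bound of Lemma~\ref{lemma:smallsc} (equivalently Theorem~\ref{sdb-2}), which \emph{does} hold with overwhelming probability: if \eqref{noon} failed, an interval of length $n^{-\eps_1}|i-i_j|$ on the $A_n$-scale would contain at least $|i-i_j|\ge n^{\eps_1}$ eigenvalues, contradicting $N_I\ll n|I|$ (after rescaling to $W_n$) once $n^{\eps_1}$ dominates the implied constant and the polylogarithmic threshold on $|I|$.  This is precisely what the reference to \cite[Theorem~56]{TVbulk} (now Theorem~\ref{sdb-2}) in the paper's proof accomplishes, and it covers both your ``small'' and ``large'' scale regimes at once without any appeal to the lower-tail gap estimate.
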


\begin{proof}
The proof of this proposition repeats the proof of \cite[Proposition 44]{TVbulk} in \cite[Section 5]{TVbulk} almost exactly.  Only the following changes have to be made:

\begin{itemize}
\item All references to \cite[Theorem 56]{TVbulk} (i.e. Theorem \ref{sdb}) need to be replaced with Theorem \ref{sdb-2}.
\item All references to \cite[Proposition 58]{TVbulk} (i.e. Proposition \ref{deloc}) need to be replaced with Proposition \ref{deloc2}.
\item The edge regions in which $\lambda_i(A(z))$ do not fall inside the bulk region $[(-2+\eps')n, (2-\eps') n]$ no longer need to be treated separately, thus simplifying the last paragraph of the proof somewhat.
\end{itemize}
\end{proof}

Given these two propositions, the proof of Theorem \ref{theorem:main2} repeats the proof of \cite[Theorem 15]{TVbulk} in \cite[Section 3.3]{TVbulk} almost exactly.  Only the following changes have to be made:

\begin{itemize}
\item All references to \cite[Proposition 44]{TVbulk} need to be replaced with Proposition \ref{lemma:GCC}.
\end{itemize}

The proof of Theorem \ref{theorem:main2} is now complete.

\end{document}